\def\R{{\mathbb R}}
\def\d{{\rm d}}
\def\e{{\rm e}}
\def\:{\colon}
\def\email#1{\ead{#1}}
\newtheorem{theorem}{Theorem}[section]
\newtheorem{corollary}[theorem]{Corollary}
\newtheorem{lemma}[theorem]{Lemma}
\newtheorem{definition}[theorem]{Definition}
\def\dist{{\rm dist}}
\begin{document}

\begin{frontmatter}

\author[RL]{R.\ Laister\corref{thing}}
\ead{Robert.Laister@uwe.ac.uk}
\address[RL]{Department of Engineering Design and Mathematics, \\ University of the West of England, Bristol BS16 1QY, UK.}
\author[warwick]{J.C.\ Robinson}
\ead{J.C.Robinson@warwick.ac.uk}
\author[warwick]{M.\ Sier{\.z}\polhk{e}ga}
\ead{M.L.Sierzega@warwick.ac.uk}
\address[warwick]{Mathematics Institute, Zeeman Building,\\ University of Warwick, Coventry CV4 7AL, UK.}
%
\author[AVL]{A.\ Vidal-L\'opez}
\email{Alejandro.Vidal@xjtlu.edu.cn}
\address[AVL]{Department of Mathematical Sciences, Xi'an Jiaotong-Liverpool University, Suzhou 215123, China P.~R.}

\title{A complete characterisation of local existence for semilinear heat equations in Lebesgue spaces}

\begin{abstract}
We consider the scalar semilinear heat equation $u_t-\Delta u=f(u)$, where $f\colon[0,\infty)\to[0,\infty)$ is continuous and non-decreasing but need not be convex. We completely characterise those functions $f$ for which the equation has a local solution bounded in $L^q(\Omega)$ for all non-negative initial data $u_0\in L^q(\Omega)$, when $\Omega\subset\R^d$ is a bounded domain with Dirichlet boundary conditions. For $q\in(1,\infty)$ this holds if and only if $\limsup_{s\to\infty}s^{-(1+2q/d)}f(s)<\infty$; and for $q=1$ if and only if $\int_1^\infty s^{-(1+2/d)}F(s)\,\d s<\infty$, where $F(s)=\sup_{1\le t\le s}f(t)/t$. This shows for the first time that the model nonlinearity $f(u)=u^{1+2q/d}$ is truly the `boundary case' when $q\in(1,\infty)$, but that this is not true for $q=1$.

The same characterisation results hold for the equation posed on the whole space $\R^d$ provided that in addition $\limsup_{s\to0}f(s)/s<\infty$.
\end{abstract}

\begin{keyword}
Semilinear heat equation\sep Dirichlet  problem\sep local existence\sep non-existence \sep instantaneous blow-up \sep Dirichlet heat kernel.

\end{keyword}

\end{frontmatter}

\section{Introduction}

This paper concerns local existence of solutions of the scalar semilinear heat equation
\begin{equation}\label{nhe}
u_t-\Delta u=f(u),\qquad u(0)=u_0\ge0,
\end{equation}
on the whole space $\R^d$ and on
smooth bounded domains $\Omega\subset\R^d$ with Dirichlet boundary conditions, when $u_0\in L^q(\Omega)$, $1\le q<\infty$.
Throughout and without loss of generality we assume that $\Omega$ contains the origin.

 We give a complete solution to the classical problem of characterising those functions $f$ for which (\ref{nhe}) has a local solution that is bounded in $L^q(\Omega)$ for all non-negative initial data in $L^q(\Omega)$. It is perhaps surprising that such results are not already available in the literature, but they are not; nor do our characterisations follow from what has previously been proved about (\ref{nhe}). Indeed, most previous results focus on the particular nonlinearity $f(u)=u^p$, with more general treatments assuming that $f$ is convex. We impose no such restrictions in this paper, requiring only that $f:[0,\infty)\to[0,\infty)$ is continuous and non-decreasing.

 The main contribution this paper makes is in identifying the correct characterisation for both the case $q>1$ and $q=1$. Given the `correct' assumptions on $f$, the methods of proof for existence/non-existence are not difficult, but still require some care. The non-existence results rely on lower bounds on the heat kernel, and in particular on lower bounds for the action of the heat semigroup on initial conditions equal to the characteristic function of a ball. In a very imprecise way, they show that for $q>1$ the model equation with $f(u)=u^p$ `tells the whole story', but that this is decidedly not the case when $q=1$.

Local well-posedness of (\ref{nhe}) for smooth data falls within the scope of the standard theory of parabolic equations that goes back half a century \cite{LSU}. In the early 1980s the well-posedness theory was extended by Weissler \cite{W79,W80,W81} to include initial data in Lebesgue spaces, with a locally Lipschitz source term $f$ satisfying a Lipschitz bound of the form
\begin{equation}\label{Lip}
|f(u)-f(v)|\leq C |u-v|(1+|u|^{p-1}+|v|^{p-1})
\end{equation}
providing sufficient conditions for local existence (and uniqueness).

In particular, in these papers and in much subsequent work, attention was almost exclusively focused on the canonical model with $f(u)=|u|^{p-1}u$ introduced by Fujita \cite{Fuj1}. For this particular nonlinearity, given $q\in(1,\infty)$, the pioneering results of \cite{W79,W80,W81} along with those of Giga \cite{G} and Brezis \& Cazenave \cite{BC} identify a critical exponent $p^\star=1+2q/d$ such that (\ref{nhe}) with $f$ satisfying (\ref{Lip}) is locally well posed in $L^q$ if and only if $p\le p^\star$; for $p>p^\star$ one can find initial data in $L^q$ for which there is no local solution. While for $q>1$ the equation is well behaved when $p=p^\star$, for the case $q=1$ Celik \& Zhou \cite{CZ} showed that for the critical exponent $p^\star=1+2/d$ there are $L^1$ initial data for which there is no solution (resolving a problem posed in \cite{BC}).

This theory has been extended in a number of ways. One natural direction was to extend the theory towards weaker classes of data (e.g.\ measure-valued initial conditions), see Brezis \& Friedman \cite{BF}, for example. Along these lines, Baras \& Pierre \cite{BP} obtained a necessary and sufficient condition on the initial condition for local existence of solutions when $f$ is convex.

A second direction focuses on finite-time blowup versus global existence. In most of these analyses, the particular form of the Fujita nonlinearity $f(u)=|u|^{p-1}u$ or a related convexity assumption plays a crucial role, see for example \cite{BaC, BCMR, FM, Fuj1, GV, Kaplan, NS}. For example, the homogeneity of $u^p$ facilitates the use of similarity solutions - such scale invariance also makes transparent the role of the critical exponent, while for a general convex $f$ one can use Jensen's inequality. Note that we do not consider finite-time blowup here, but rather local non-existence, i.e.\ `immediate blowup' in some sense.

However, most of these results break down if we only make the assumption that $f$ is monotonic.  In this case, in order to describe fully the conditions on $f$ ensuring that an initial condition in $L^{q}$ gives rise to a local solution we need a better understanding of  the delicate balance between the smoothing action of the heat flow and the converse effect of the growing source. In this paper we provide, for every $q\in[1,\infty)$, a precise characterisation of those $f$ for which the equation (\ref{nhe}) has local solutions bounded in $L^q(\Omega)$ for all non-negative initial data $u_0\in L^q(\Omega)$. Note that this includes the delicate case $q=1$.

First we show that for $q\in[1,\infty)$, if
\begin{equation}\label{introLq}
\limsup_{s\to\infty}s^{-(1+2q/d)}f(s)=\infty
\end{equation}
then there exists a non-negative $u_0\in L^q(\Omega)$ for which equation (\ref{nhe}) has no local solution that is bounded in $L^q(\Omega)$. Since the existence of a finite limit in (\ref{introLq}) implies that $f(s)\le C(1+s^{1+2q/d})$ for some constant $C$, monotonicity of solutions along with classical results for (\ref{u2p}) yields local existence in this case for $q\in(1,\infty)$. It follows (see Theorem \ref{thm:char1}) that equation (\ref{nhe}) has at least one local $L^q$-bounded solution for every non-negative $u_0\in L^q(\Omega)$ if and only if
$$
\limsup_{s\to\infty}s^{-(1+2q/d)}<\infty.
$$
The `moral' of this is that for $q\in(1,\infty)$, the model problem with $f(s)=s^p$ in some sense tells the whole story, since the critical case lies precisely on the boundary between local existence/non-existence. (This idea has perhaps always been implicit in the discussions in the literature, but has not had a rigorous proof until now.)

The case $q=1$ is more delicate, and is well known to be significantly more challenging. As remarked above, Celik \& Zhou \cite{CZ} showed that for the canonical equation
 \begin{equation}\label{u2p}
 u_t-\Delta u=u^p
 \end{equation}
  with $p=p^\star=1+2/d$ there is non-negative initial data in $L^1(\R^d)$ and $L^1(\Omega)$ for which there is no local solution. One might therefore conjecture that for $q=1$ the condition in (\ref{introLq}) can be weakened to
$$
\limsup_{s\to\infty}s^{-(1+2/d)}f(s)>0
$$
(i.e.\ the limit is finite but strictly positive) and still ensure non-existence for some non-negative $u_0\in L^1(\Omega)$. In fact significantly more is true: we show that the condition
$$
\sum_{k=1}^\infty s_k^{-(1+2/d)}f(s_k)=\infty
$$
for some sequence such that $s_{k+1}\ge\theta s_k$ ($\theta>1$) is sufficient for such a non-existence result. In particular, if $f$ satisfies this condition there are non-negative data in $L^1(\Omega)$ for which there is no solution with $u(t)\in L^1(\Omega)$ for any $t>0$.

For any particular $f$ this divergent series condition seems awkward to check in practice, so we show that it is equivalent to the integral condition
\begin{equation}\label{withFs}
\int_1^\infty s^{-(1+2/d)}F(s)\,\d s=\infty,\qquad\mbox{where}\quad F(s)=\sup_{1\le t\le s}\frac{f(t)}{t}.
\end{equation}
Remarkably, if the integral in (\ref{withFs}) is finite, then a version of an argument due to Sier{\.z}\polhk{e}ga \cite{MT} guarantees local existence of an $L^1$-bounded  solution for every non-negative $u_0\in L^1(\Omega)$ (in fact the solution is in $L^\infty(\Omega)$ for every $t>0$). As a consequence we obtain our second main result (Corollary \ref{corL1}), namely that equation (\ref{nhe}) has at least one local $L^1$-bounded solution for every non-negative $u_0\in L^1(\Omega)$ if and only if
$$
\int_1^\infty s^{-(1+2/d)}F(s)\,\d s<\infty,\qquad\mbox{where}\quad F(s)=\sup_{1\le t\le s}\frac{f(t)}{t}.
$$
Here the `moral' is that the model problem {\it does not} tell the whole story.

We note here that we do not treat the question of uniqueness in this paper, but concentrate solely on local existence. For this reason we do not require any Lipschitz-type assumptions on $f$ (such as (\ref{Lip})).

The paper is organised as follows. In Section \ref{sec:heat} we prove some preliminary lower bounds on solutions of the heat equation for an initial condition that is the characteristic function of a ball; these are the key estimates that we use in our proofs. Section 3 contains the results for $q>1$, with Section 4 treating $q=1$. In Section 5 we discuss the problem posed on the whole space and on a bounded domain with Neumann boundary conditions, and we end with a brief recapitulation and discussion of open problems.

\section{Lower bounds on solutions of the Dirichlet heat equation}\label{sec:heat}

An important ingredient of our arguments is the following simple lemma, which gives a lower bound on the action of the heat equation on the characteristic function of a Euclidean ball. We write $B_r(x)$ for the ball in $\R^d$ of radius $r$ centred at $x$, denote by $\chi_{r}$ the characteristic function of $B_r:=B_r(0)$, and use $\omega_d$ for the volume of the unit ball in $\R^d$.

The solution of the heat equation on $\Omega$ with Dirichlet boundary conditions,
$$
u_t-\Delta u=0,\qquad u|_{\partial\Omega}=0,\qquad u(x,0)=u_0(x)\in L^1(\Omega)
$$
can be given in terms of the Dirichlet heat kernel $K_\Omega$ by the expression
$$
u(x,t)=[S(t)u_0](x):=\int_\Omega K_\Omega(x,y;t)u_0(y)\,\d y.
$$
The proofs of the results in this section use
 the following lower bound on $K_\Omega$: if the line segment joining $x$ and $y$ is a distance at least $\delta$ from $\partial\Omega$, then the Dirichlet heat kernel $K_\Omega(x,y;t)$ is bounded below by the Gaussian heat kernel on $\R^d$,
\begin{equation}\label{KOG}
K_\Omega(x,y;t)\ge\e^{-d^2\pi^2t/4\delta^2}(4\pi t)^{-d/2}\e^{-|x-y|^2/4t}\qquad\mbox{for all}\quad t>0.
\end{equation}
(See van den Berg \cite{vdB1}, Theorem 2 and Lemmas 8 and 9.)

\begin{lemma}\label{cflb}
  There exists an absolute constant $c_d\in(0,1)$, which depends only on $d$,  such that for any $r>0$ for which $B_{r+\delta}\subset\Omega$,
  \begin{equation}\label{grows}
  S(t)\chi_{r}\ge c_d\,\left(\frac{r}{r+\sqrt t}\right)^d\,\chi_{r+\sqrt t},
  \end{equation}
  for all $0<t\le\delta^2$.
\end{lemma}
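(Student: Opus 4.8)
My plan is to use the lower bound (\ref{KOG}) for the Dirichlet heat kernel together with a direct estimate of the Gaussian integral over the ball $B_r$. Fix $x$ with $|x| \le r + \sqrt t$, and suppose $t \le \delta^2$. For any $y \in B_r$, the line segment joining $x$ and $y$ stays within $B_{r+\sqrt t} \subset B_{r+\delta}$ (using $\sqrt t \le \delta$), hence is at distance at least $\delta$ from $\partial\Omega$ by the hypothesis $B_{r+\delta} \subset \Omega$. So (\ref{KOG}) applies with this $\delta$, and since $t \le \delta^2$ the exponential prefactor $\e^{-d^2\pi^2 t/4\delta^2}$ is bounded below by the absolute constant $\e^{-d^2\pi^2/4}$. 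Therefore
\[
[S(t)\chi_r](x) \ge \e^{-d^2\pi^2/4}(4\pi t)^{-d/2}\int_{B_r}\e^{-|x-y|^2/4t}\,\d y.
\]

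The remaining task is to bound the Gaussian integral from below by a multiple of $(r/(r+\sqrt t))^d$. The key geometric observation is that, since $|x| \le r + \sqrt t$, the ball $B_{\rho}(x')$ of radius $\rho := r\sqrt t/(r+\sqrt t)$ centred at the point $x' := (1 - \rho/|x|)x$ (or $x'=0$ if $x=0$) is contained in $B_r$, and every point of it lies within distance $\sqrt t + \rho \le 2\sqrt t$ of $x$. Hence on this sub-ball the Gaussian factor satisfies $\e^{-|x-y|^2/4t} \ge \e^{-1}$, and
\[
(4\pi t)^{-d/2}\int_{B_r}\e^{-|x-y|^2/4t}\,\d y \ge (4\pi t)^{-d/2}\,\e^{-1}\,\omega_d\rho^d
= \e^{-1}\frac{\omega_d}{(4\pi)^{d/2}}\left(\frac{r}{r+\sqrt t}\right)^d.
\]
Combining the two displays gives (\ref{grows}) with $c_d := \e^{-1-d^2\pi^2/4}\,\omega_d(4\pi)^{-d/2}$, which depends only on $d$; shrinking it if necessary we may take $c_d \in (0,1)$. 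Since the resulting bound is uniform in $x \in B_{r+\sqrt t}$ and $[S(t)\chi_r](x) \ge 0$ everywhere, the pointwise inequality $S(t)\chi_r \ge c_d(r/(r+\sqrt t))^d\chi_{r+\sqrt t}$ follows.

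The one point requiring genuine care is the elementary geometric claim that $B_\rho(x') \subset B_r$ with the stated $\rho$: this is where the radius ratio $r/(r+\sqrt t)$ enters, and it must be verified that the choice of centre $x'$ keeps the small ball inside $B_r$ for every admissible $x$, including $|x|$ close to $r+\sqrt t$. Once this is checked the rest is a routine computation, so I expect no serious obstacle beyond getting the constants and the inclusion right.
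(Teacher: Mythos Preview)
Your overall strategy is sound and your first reduction via the kernel bound matches the paper's opening step. However, the ``one point requiring genuine care'' that you flag is exactly where the argument, as written, fails: with your choice $x'=(1-\rho/|x|)x$ one has $|x'|=|x|-\rho$ (whenever $|x|\ge\rho$), hence $|x'|+\rho=|x|$, so $B_\rho(x')\subset B_r$ holds only when $|x|\le r$. For $r<|x|\le r+\sqrt t$ --- precisely the new region the lemma is meant to cover --- your sub-ball protrudes from $B_r$. The repair is immediate: take instead $x'=\lambda x$ with $\lambda=\min\bigl(1,(r-\rho)/|x|\bigr)$, so that $|x'|\le r-\rho$ always. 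Then $|x-x'|\le (r+\sqrt t)-(r-\rho)=\sqrt t+\rho$, and every $y\in B_\rho(x')$ satisfies $|x-y|\le \sqrt t+2\rho\le 3\sqrt t$; the Gaussian lower bound becomes $\e^{-9/4}$ rather than $\e^{-1}$, and the computation goes through with $c_d=\e^{-9/4-d^2\pi^2/4}\omega_d(4\pi)^{-d/2}$.

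Once corrected, your route is actually a little cleaner than the paper's. The paper uses radial monotonicity of the Gaussian integral to reduce to the extremal point $|x|=r+\sqrt t$, rescales, and then splits into the two regimes $r/\sqrt t\ge1$ and $r/\sqrt t\le1$, obtaining separate constants which are combined at the end. Your single sub-ball of radius $\rho=r\sqrt t/(r+\sqrt t)$ handles both regimes simultaneously and makes the factor $(r/(r+\sqrt t))^d$ appear directly, at the price of a slightly worse absolute constant.
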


\begin{proof}
For $x$ such that $\dist(x,\partial\Omega)\ge\delta$, the lower bound in (\ref{KOG}) implies that for $0<t\le\delta^2$
\begin{align*}
[S(t)\chi_r](x)&=\int_{B_r(0)}K_\Omega(x,y;t)\,\d y\\
&\ge \e^{-d^2\pi^2/4}\,(4\pi t)^{-d/2}\int_{B_r}\e^{-|x-y|^2/4t}\,\d y.
\end{align*}
The latter integral is radially symmetric and decreasing with $|x|$ and so for $|x|\le r+\sqrt{t}$, choosing any unit vector $\bf u$ we can write
\begin{align*}
[S(t)\chi_r](x)&\ge \e^{-d^2\pi^2/4}\,(4\pi t)^{-d/2}\int_{B_r((r+\sqrt t){\bf u})}\e^{-|z|^2/4t}\,\d z\\
&=\e^{-d^2\pi^2/4}\,\pi^{-d/2}\int_{B_{r/2\sqrt t}((\frac{1}{2}+\frac{r}{2\sqrt{t}}){\bf u})}\e^{-|w|^2}\,\d w.
\end{align*}
Observing that
$$
B_{r/2\sqrt t}(({\textstyle\frac{1}{2}}+{\textstyle\frac{r}{2\sqrt{t}}}){\bf u})\subseteq B_{\rho/2\sqrt t}(({\textstyle\frac{1}{2}}+{\textstyle\frac{\rho}{2\sqrt{t}}}){\bf u})
$$
if $\rho\ge r$, it follows that for $r/\sqrt t\ge 1$ we have
$$
[S(t)\chi_r](x)\ge\e^{-d^2\pi^2/4}\,\pi^{-d/2}\int_{B_{1/2}({\bf u})}\e^{-|w|^2}\,\d w=:c'_d.
$$
On the other hand, if $r/\sqrt t\le 1$ then
$$
[S(t)\chi_r](x)\ge\e^{-d^2\pi^2/4}\,\pi^{-d/2}\left(\frac{r}{2\sqrt t}\right)^d\e^{-9/4}=:c''_d(r/\sqrt t)^d.
$$
So with $c_d=\min(c_d',c_d'')$
$$
[S(t)\chi_r](x)\ge c_d\left(\frac{r}{\max(r,\sqrt t)}\right)^d\ge c_d\left(\frac{r}{r+\sqrt t}\right)^d.\eqno\qedhere
$$
\end{proof}

We will use this result in the form of one of the following two simple corollaries.

\begin{corollary}\label{norm1}
  There exists an absolute constant $\alpha_d>0$, depending only on $d$,  such that for any $r,\delta>0$ for which $B_{r+\delta}\subset\Omega$,
$$
  \int_\Omega S(t)\chi_{r}\,\d x\ge \alpha_dr^d,
$$
  for all $0<t\le\delta^2$.
\end{corollary}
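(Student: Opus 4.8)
The plan is simply to integrate the pointwise lower bound of Lemma~\ref{cflb} over $\Omega$. The hypothesis $0<t\le\delta^2$ gives $\sqrt t\le\delta$, hence $B_{r+\sqrt t}\subseteq B_{r+\delta}\subset\Omega$, so the characteristic function $\chi_{r+\sqrt t}$ is supported entirely inside $\Omega$ and $\int_\Omega\chi_{r+\sqrt t}\,\d x=\omega_d(r+\sqrt t)^d$. Feeding (\ref{grows}) into the integral then yields
\[
\int_\Omega S(t)\chi_r\,\d x\ \ge\ c_d\left(\frac{r}{r+\sqrt t}\right)^d\int_\Omega\chi_{r+\sqrt t}\,\d x\ =\ c_d\left(\frac{r}{r+\sqrt t}\right)^d\omega_d(r+\sqrt t)^d\ =\ c_d\,\omega_d\,r^d,
\]
so the statement holds with $\alpha_d:=c_d\,\omega_d$, a quantity depending only on $d$.

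There is essentially no obstacle beyond the estimate already established in Lemma~\ref{cflb}: the decaying prefactor $(r/(r+\sqrt t))^d$ in (\ref{grows}) is cancelled exactly by the volume $(r+\sqrt t)^d$ of the (larger) ball on which that lower bound is supported. This cancellation is really the whole content of the corollary — it records that the $L^1$ mass $S(t)\chi_r$ delivers to $\Omega$ stays bounded below uniformly in $t$ (for $t\le\delta^2$), even though the pointwise lower bound itself deteriorates as $t$ grows.

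The only point requiring a moment's care is the interplay of the two constraints on $t$: the inclusion $B_{r+\sqrt t}\subset\Omega$ needed to evaluate $\int_\Omega\chi_{r+\sqrt t}\,\d x$ as a full ball volume holds precisely when $\sqrt t\le\delta$, which is exactly the range of $t$ for which (\ref{grows}) was proved, so the hypotheses line up and no extra restriction on $t$, $r$ or $\delta$ is introduced.
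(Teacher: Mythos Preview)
Your proof is correct and follows exactly the paper's own argument: integrate the pointwise bound (\ref{grows}) over $\Omega$, use $B_{r+\sqrt t}\subset\Omega$ to compute $\int_\Omega\chi_{r+\sqrt t}=\omega_d(r+\sqrt t)^d$, and observe the cancellation to obtain $\alpha_d=c_d\omega_d$. The paper's proof is a one-line version of what you wrote.
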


\begin{proof}
  Integrating the inequality in (\ref{grows}) over $\Omega$ yields
$$
  \int_\Omega S(t)\chi_r\,\d x
  \ge c_d\,\left(\frac{r}{r+\sqrt t}\right)^d\,\int_\Omega\chi_{r+\sqrt t}\,\d x\\
  =c_d\omega_dr^d.\eqno\qedhere
$$
\end{proof}

\begin{corollary}\label{notail}
  There exists an absolute constant $\beta_d>0$, depending only on $d$,  such that for any $r,\delta>0$ for which $B_{r+\delta}\subset\Omega$,
$$
  S(t)\chi_{r}\ge \beta_d\,\chi_{r+\sqrt t},
$$
  for all $0<t\le\min(\delta^2,r^2)$.
\end{corollary}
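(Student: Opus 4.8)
The plan is to derive this directly from Lemma \ref{cflb}; the only work is controlling the geometric factor $(r/(r+\sqrt t))^d$ from below under the extra hypothesis $t\le r^2$. First I would note that since $B_{r+\delta}\subset\Omega$ and $0<t\le\delta^2$, Lemma \ref{cflb} applies and gives
$$
S(t)\chi_{r}\ge c_d\left(\frac{r}{r+\sqrt t}\right)^d\chi_{r+\sqrt t}
$$
pointwise, with $c_d\in(0,1)$ the absolute constant from that lemma.

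Next I would use the additional restriction $t\le r^2$, i.e.\ $\sqrt t\le r$, to see that $r+\sqrt t\le 2r$, so that $r/(r+\sqrt t)\ge 1/2$ and hence $(r/(r+\sqrt t))^d\ge 2^{-d}$. Substituting this into the bound above yields
$$
S(t)\chi_{r}\ge c_d 2^{-d}\,\chi_{r+\sqrt t}
$$
for all $0<t\le\min(\delta^2,r^2)$, so the claim holds with $\beta_d:=c_d 2^{-d}$, which depends only on $d$ and is strictly positive.

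There is essentially no obstacle here: the corollary is just Lemma \ref{cflb} specialised to the regime $t\le r^2$, where the spreading of the ball is controlled (the support radius $r+\sqrt t$ is at most twice the original radius $r$), so no mass is lost to the geometric prefactor beyond a fixed dimensional constant. The only point requiring any care is to keep both constraints $t\le\delta^2$ (needed to invoke the heat-kernel lower bound \eqref{KOG} via Lemma \ref{cflb}) and $t\le r^2$ (needed to bound the prefactor), which is exactly why the statement is phrased with $t\le\min(\delta^2,r^2)$.
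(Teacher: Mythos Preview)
Your argument is correct and is exactly the intended one: the paper states this corollary without proof precisely because it follows immediately from Lemma \ref{cflb} by noting that $t\le r^2$ forces $r/(r+\sqrt t)\ge 1/2$, so one may take $\beta_d=c_d2^{-d}$.
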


\section{Initial data in $L^q(\Omega)$, $q\in(1,\infty)$}

Given these preliminaries we can prove our first non-existence result. We take the following definition from \cite{SQ} as our (essentially minimal) definition of a solution of (\ref{nhe}). Note that any classical or mild solution is a local integral solution in the sense of this definition \cite[p.\ 77--78]{SQ}.

\begin{definition}\label{lis}
Given $f:[0,\infty)\to[0,\infty)$  and $u_0\ge0$ we say that $u$ is a \emph{local integral solution} of (\ref{nhe}) on $[0,T)$ if $u:\Omega \times[0,T)\to[0,\infty]$ is measurable, finite almost everywhere, and satisfies
\begin{equation}
u(t)=S(t)u_0+\int_0^t S(t-s)f(u(s))\, \d s\label{eq:mild}
\end{equation}
 almost everywhere in $\Omega \times[0,T)$.\label{def:soln}
\end{definition}

We will be interested in solutions with non-negative initial data $u_0\in L^q(\Omega)$ that remain bounded in $L^q(\Omega)$. To this end we make the following definition.

\begin{definition}
  We say that $u$ is a \emph{local $L^q$ solution} of (\ref{nhe}) if $u$ is a local integral solution on $[0,T)$ for some $T>0$ and $u\in L^\infty((0,T);L^q(\Omega))$. If every non-negative $u_0\in L^q(\Omega)$ gives rise to a local $L^q$ solution then we say that (\ref{nhe}) has the \emph{local existence property in $L^q(\Omega)$}.
\end{definition}

We now show that there are non-negative initial conditions in $L^q(\Omega)$ for which there is no local $L^q$ solution if $f$ satisfies the asymptotic growth condition in (\ref{infinite1}). This condition is modelled on the stronger condition
$$
\limsup_{s\to\infty}s^{-\gamma}f(s)=\infty
$$
for some $\gamma>q(1+2/d)$, which was used by Laister et al.\ in \cite{LRS1} to construct a non-negative initial condition in $L^q(\Omega)$ for which any local integral solution is not in $L^1_{\rm loc}(\Omega)$ for any $t>0$ small (a stronger form of non-existence than we obtain in Theorem \ref{T1}). (A similar condition was used to analyse the problem on the whole space in \cite{LRS0}.)

Note that our assumption in Theorem \ref{T1} does not imply any lower bounds on the function $f$ itself, nor does our result require them; e.g.\ we impose no condition on the behaviour of
$$
\liminf_{s\to\infty}s^{-(1+2q/d)}f(s),
$$
as in Weissler \cite{W80} (Theorem 5, Corollaries 5.1 and 5.2), nor do we require $f$ to be continuous.

\begin{theorem}\label{T1}
Let $f:[0,\infty)\to[0,\infty)$ be non-decreasing. If $q\in[1,\infty)$ and
\begin{equation}\label{infinite1}
\limsup_{s\to\infty}s^{-(1+2q/d)}f(s)=\infty
\end{equation}
then there exists a non-negative $u_0\in L^q(\Omega)$ such that
\begin{equation}\label{eqn}
u_t-\Delta u=f(u),\qquad u|_{\partial\Omega}=0,\qquad u(0)=u_0
\end{equation}
has no local $L^q$ solution.
\end{theorem}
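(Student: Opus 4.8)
\noindent\textit{Proof proposal.}

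The plan is to take the obstructing datum to be a superposition of far-apart, shrinking scaled bumps: let $\psi_k$ denote the characteristic function of a ball $B_{r_k}(x_k)$, and set
$$
u_0=\sum_{k\ge1}a_k\psi_k,
$$
where the closed balls $\overline{B_{2r_k}(x_k)}$ are pairwise disjoint and contained in $\Omega$, $x_k\to0$, $r_k\to0$, the heights satisfy $a_k\to\infty$, and the radii are so small that $a_k^q\,\omega_d r_k^d\le2^{-k}$. Disjointness then gives $u_0\in L^q(\Omega)$, and $u_0\ge0$. The point of the construction is that the $k$th bump on its own forces any local integral solution to have $L^q$-norm at least $k$ by the short time $r_k^2$; since $r_k\to0$, for any candidate existence interval $[0,T)$ there are arbitrarily large such $k$ with $r_k^2<T$, so no local integral solution can lie in $L^\infty((0,T);L^q(\Omega))$, i.e.\ there is no local $L^q$ solution.

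Next I would establish the one-step lower bound for a single bump. Fix $k$. The proof of Corollary~\ref{notail} uses only distances of line segments to $\partial\Omega$, so its translated form applies (here $B_{2r_k}(x_k)\subset\Omega$) and gives $S(t)\psi_k\ge\beta_d\psi_k$ for $0<t\le r_k^2$. Let $u$ be a local integral solution on $[0,T)$ with $T>r_k^2$. From $u_0\ge a_k\psi_k$ and positivity of $S(t)$ we obtain $u(t)\ge S(t)u_0\ge\beta_d a_k\psi_k$ for a.e.\ $t\in(0,r_k^2]$; since $f$ is non-negative and non-decreasing, $f(u(t))\ge f(\beta_d a_k)\psi_k$ there. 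Substituting into~\eqref{eq:mild}, discarding the non-negative term $S(t)u_0$, and applying Corollary~\ref{notail} again to $S(t-s)\psi_k$,
$$
u(t)\ \ge\ \int_0^t S(t-s)f(u(s))\,\d s\ \ge\ f(\beta_d a_k)\int_0^t S(t-s)\psi_k\,\d s\ \ge\ \beta_d\,t\,f(\beta_d a_k)\,\psi_k
$$
for a.e.\ $t\in(0,r_k^2]$, whence, taking $t\in(r_k^2/2,r_k^2)$,
$$
\|u(t)\|_{L^q(\Omega)}\ \ge\ \tfrac12\,\beta_d\,\omega_d^{1/q}\,r_k^{2+d/q}\,f(\beta_d a_k)\qquad\text{for a.e.\ such }t.
$$

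It remains to choose the parameters. Put $a_k=s_k/\beta_d$, so that $f(\beta_d a_k)=f(s_k)$ and the two requirements on the $k$th bump read $s_k^q\,\omega_d r_k^d\le\beta_d^q 2^{-k}$ and $r_k^{2+d/q}f(s_k)\ge c\,k$, for a fixed $c=c(d,q)$. The decisive observation is that these become compatible exactly for $r_k\asymp s_k^{-q/d}$: if $f(s_k)\ge M_k s_k^{1+2q/d}$, then since $(1+2q/d)\cdot\tfrac{q}{2q+d}=\tfrac qd$ the powers of $s_k$ cancel, and the whole construction collapses to the single requirement $M_k\ge C k\,2^{k(2q+d)/(qd)}$, with $C=C(d,q)$. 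This is the only place the growth of $f$ enters, and \eqref{infinite1} provides exactly what is needed: since $\limsup_{s\to\infty}s^{-(1+2q/d)}f(s)=\infty$, for every $M$ the set $\{s>0:f(s)\ge M s^{1+2q/d}\}$ is unbounded, so one may choose inductively $s_k\uparrow\infty$ with $f(s_k)\ge M_k s_k^{1+2q/d}$ and --- taking $s_k$ large enough at each stage --- with $r_k:=2^{-k/d}\beta_d^{q/d}\omega_d^{-1/d}s_k^{-q/d}$ small enough to accommodate the disjoint balls near the origin (a routine geometric matter). With these choices the last displayed inequality reads $\|u(t)\|_{L^q(\Omega)}\ge k$ on a positive-measure subset of $(0,r_k^2)$ for all large $k$, which contradicts $u\in L^\infty((0,T);L^q(\Omega))$.

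The hard part is making the bookkeeping of the previous step close using nothing beyond monotonicity of $f$ and the bare $\limsup$ hypothesis. It is tempting to amplify a single bump by iterating the Duhamel formula in time, imitating the finite-time blow-up of $\dot v=c f(v)$; but $f$ need not be superlinear on any interval --- it may be constant on arbitrarily long ones --- so that iteration can stall, and one bump need not preclude existence. The remedy is to use infinitely many bumps and to exploit the \emph{unboundedness} of $f(s)/s^{1+2q/d}$, so that a single Duhamel step delivers an amplification constant $M_k$ as large as we wish, large enough to overcome the summability factor $2^{-k}$ imposed by $u_0\in L^q(\Omega)$. That the powers of $s_k$ cancel exactly --- i.e.\ the homogeneity of the model nonlinearity $s^{1+2q/d}$ --- is precisely what lets this trade-off balance, and is the quantitative heart of the argument.
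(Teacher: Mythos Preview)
Your argument is correct and follows essentially the same strategy as the paper's proof: construct $u_0$ as a sum of scaled characteristic functions of shrinking balls, use one Duhamel iteration together with Corollary~\ref{notail} to bound $u(t)$ from below on each ball, and exploit the $\limsup$ hypothesis to choose heights $s_k$ with $f(s_k)/s_k^{1+2q/d}$ growing fast enough to defeat the summability constraint. The only differences are cosmetic: the paper places all balls concentrically at the origin (so no geometric packing is needed and no translation of Corollary~\ref{notail} is invoked), uses the summability weight $k^{-2}$ rather than $2^{-k}$, and correspondingly requires $f(\phi_k)\ge\phi_k^{1+2q/d}\e^{k/q}$ rather than your $M_k\sim k\,2^{ck}$; the resulting estimate $\|u(t)\|_{L^q}^q\gtrsim k^{-2q(d+2q/d)}\e^k\to\infty$ for $t\in[r_k^2/2,r_k^2]$ is otherwise identical to yours.
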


\begin{proof} Set $p=1+(2q/d)$. It follows from (\ref{infinite1}) that
we can choose a sequence $\phi_k$ such that
$$
 \phi_k\ge k\qquad\mbox{and}\qquad f(\phi_k)\ge \phi_k^p\e^{k/q}.
$$

We now construct an initial condition in $L^q(\Omega)$ that is the sum of characteristic functions on a sequence of balls of decreasing radius. More precisely, set
$$
 r_k=\varepsilon\phi_k^{-q/d}k^{-2q/d},
$$
 and choose the initial data
$$
 u_0(x)=\sum_{k=1}^\infty u_k,\qquad u_k=\beta_d^{-1}\phi_k\chi_{r_k},
$$
 where $\beta_d$ is the constant from Corollary \ref{notail} and $\varepsilon$ is chosen sufficiently small that $B_{2r_k}\subset\Omega$ for every $k$.   Noting that $\|u_k\|_{L^q}=\beta_d^{-1}\varepsilon^{d/q}k^{-2}$ it follows that
$$
 \|u_0\|_{L^q}\le\sum_k\|u_k\|_{L^q}= \beta_d^{-1}\varepsilon^{d/q}\sum_k k^{-2}<\infty.
$$

Now, if a solution $u(t)$ of (\ref{eqn}) exists, then it can be written using the variation of constants formula,
\begin{equation}\label{voc}
u(t)=S(t)u_0+\int_0^t S(t-s)f(u(s))\,\d s.
\end{equation}
Since $u\ge0$ and $f\ge0$, it is immediate that
\begin{equation}\label{oneball}
u(t)\ge S(t)u_0\ge S(t)u_k,
\end{equation}
for any choice of $k$. Choosing and fixing one $k$ for now, we can neglect the first term in (\ref{voc}) and use the lower bound in (\ref{oneball}) to obtain
\begin{equation}\label{easylb}
u(t)\ge \int_0^t S(t-s)f(S(s)u_k)\,\d s,
\end{equation}
since $f$ is non-decreasing. To aid readability, and in a slight abuse of notation, we now write $\chi(r)$ for $\chi_r$.

Corollary \ref{notail} with $\delta=r_k$ implies that
$$
S(s)u_k=S(s)[\beta_d^{-1}\phi_k\chi(r_k)]\ge\phi_k\,\chi(r_k+\sqrt t)\ge \phi_k\,\chi(r_k),\qquad 0\le s\le r_k^{2},
$$
and so
$$
f(S(s)u_k)\ge f(\phi_k)\,\chi(r_k),\qquad 0\le s\le r_k^{2},
$$
since $f$ is non-decreasing. Using Corollary \ref{notail} again
$$
S(t-s)f(S(s)u_k)\ge \beta_d f(\phi_k)\,\chi(r_k),\qquad 0\le s\le t\le r_k^2.
$$

Now, using the lower bound in (\ref{easylb}), it follows that for any\footnote{We prove a lower bound valid for $t$ in an interval since a priori our definition of a local $L^q$ solution requires only that $u(t)\in L^q$ for almost every $t$.}  $t\in[t_k/2,t_k]$, where $t_k=r_k^2$,
$$
[u(t)](x)\ge\int_0^{t}S(t-s)f(S(s)u_0)\,\d s\ge \beta_d\,r_k^2\,f(\phi_k)\,\chi(r_k).
$$
Thus
\begin{align*}
\|u(t)\|_{L^q}^q\ge\int_{B(r_k)}|u(t)|^q\,\d x&\ge c\,r_k^{2q}f(\phi_k)^q r_k^d \\
&=c r_k^{d+2q}f(\phi_k)^q\\
&\ge c [\varepsilon\phi_k^{-q/d}k^{-2q/d}]^{d+2q}\phi_k^{[1+(2q/d)]q}\e^k\\
&= ck^{-2q(d+2q/d)}\e^k.
\end{align*}
Since the right-hand side tends to infinity as $k\to\infty$, it follows that $u$ is not an element of $L^\infty((0,T);L^q(\Omega))$ for any $T>0$.
\end{proof}

We remarked above that Laister et al. \cite{LRS1} showed that under the stronger condition
$$
\limsup_{s\to\infty}s^{-\gamma}f(s)=\infty,\qquad\gamma>q(1+2/d)
$$
there is non-negative initial data in $L^q(\Omega)$ for which any local solution is not in $L^1_{\rm loc}(\Omega)$ for all small $t>0$. It is an interesting open question whether such strong blowup still occurs under the weaker condition in Theorem \ref{T1}.

A combination of the blowup result of Theorem \ref{T1} and classical results for the Fujita equation now give our first characterisation theorem, on local existence in $L^q(\Omega)$ when $q>1$.

\begin{theorem}\label{thm:char1}
 Let $f:[0,\infty)\to[0,\infty)$ be non-decreasing and continuous. If $q\in(1,\infty)$ then (\ref{eqn}) has the local existence property in $L^q(\Omega)$ if and only if
\begin{equation}\label{finite}
\limsup_{s\to\infty}s^{-(1+2q/d)}f(s)<\infty.
\end{equation}
\end{theorem}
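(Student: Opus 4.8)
The plan is to prove the two directions separately, with the hard work having already been done.

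\medskip

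\noindent\emph{Necessity.} Suppose \eqref{eqn} has the local existence property in $L^q(\Omega)$. If \eqref{finite} failed, then $\limsup_{s\to\infty}s^{-(1+2q/d)}f(s)=\infty$, and since $q\in(1,\infty)\subset[1,\infty)$, Theorem~\ref{T1} immediately produces a non-negative $u_0\in L^q(\Omega)$ with no local $L^q$ solution, contradicting the local existence property. Hence \eqref{finite} must hold. This direction is essentially a one-line invocation of Theorem~\ref{T1}.

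\medskip

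\noindent\emph{Sufficiency.} Now assume \eqref{finite} holds, i.e.\ $\limsup_{s\to\infty}s^{-p}f(s)<\infty$ with $p=1+2q/d$. Since $f$ is continuous on $[0,\infty)$ and non-decreasing, the $\limsup$ bound at infinity upgrades to a global bound: there is a constant $C>0$ with
$$
f(s)\le C(1+s^{p})\qquad\text{for all }s\ge0.
$$
(Indeed, pick $M$ so that $f(s)\le As^{p}$ for $s\ge M$; on $[0,M]$ the continuous non-decreasing $f$ is bounded by $f(M)$; combine.) Fix any non-negative $u_0\in L^q(\Omega)$. Let $\bar u$ denote the (classical/mild) solution of the Fujita-type equation
$$
\bar u_t-\Delta\bar u=C(1+\bar u^{p}),\qquad \bar u|_{\partial\Omega}=0,\qquad \bar u(0)=u_0,
$$
which by the classical well-posedness theory for the critical and subcritical Fujita equation in $L^q$ with $q>1$ (Weissler~\cite{W79,W80,W81}, Giga~\cite{G}, Brezis--Cazenave~\cite{BC}; the nonlinearity $C(1+v^{p})$ satisfies the Lipschitz bound \eqref{Lip} with this exponent $p$, and $p\le p^\star=1+2q/d$ holds with equality) exists on some interval $[0,T)$ and lies in $C([0,T);L^q(\Omega))\cap L^\infty((0,T');L^q(\Omega))$ for each $T'<T$; moreover $\bar u\ge0$ since $u_0\ge0$ and the source is non-negative. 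Because $0\le f(v)\le C(1+v^{p})$ for all $v\ge0$, a standard comparison/monotone-iteration argument produces a local integral solution $u$ of \eqref{eqn} with $0\le u\le\bar u$: one can, for instance, build $u$ as the monotone limit of the Picard-type iterates $u^{(0)}=S(t)u_0$, $u^{(n+1)}(t)=S(t)u_0+\int_0^t S(t-s)f(u^{(n)}(s))\,\d s$, using that $S(t)$ is order-preserving and $f$ is non-decreasing to get $u^{(n)}\le u^{(n+1)}\le\bar u$ for all $n$ (the upper bound $\bar u$ being a supersolution), and passing to the limit by monotone convergence in \eqref{eq:mild}. Then $u\in L^\infty((0,T');L^q(\Omega))$ since $0\le u\le\bar u$, so $u$ is a local $L^q$ solution. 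As $u_0$ was arbitrary, \eqref{eqn} has the local existence property in $L^q(\Omega)$.

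\medskip

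The only genuinely delicate point is the sufficiency direction, and within it the comparison step: one must be careful that the monotone-iteration limit really satisfies the integral equation \eqref{eq:mild} and that $\bar u$ is an honest supersolution for it. The growth estimate $f(s)\le C(1+s^p)$ and the reduction to the well-posed Fujita problem are routine given the cited classical theory; the key structural input is that $q>1$ makes the exponent $p=1+2q/d$ subcritical-or-critical in the good sense (it is exactly $p^\star$), which is precisely the asymmetry with $q=1$ that the paper is emphasising. I would keep the proof short, citing \cite{BC,G,W80} for the Fujita well-posedness and the remark after \eqref{introLq} (monotonicity of solutions plus classical results for \eqref{u2p}) for the comparison argument, rather than reproving either.
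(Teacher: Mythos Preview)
Your proof is correct and follows essentially the same route as the paper: necessity is Theorem~\ref{T1}, and sufficiency proceeds by the bound $f(s)\le C(1+s^p)$ together with comparison against the well-posed equation $u_t-\Delta u=C(1+u^p)$. The paper is terser, citing \cite{RS2} (Theorem~1) for the comparison/supersolution step and Weissler \cite{W80} (Corollary~3.2) for existence, whereas you spell out the monotone-iteration argument; this is exactly the content of Theorem~\ref{MS} (itself taken from \cite{RS2}), so the two proofs coincide in substance.
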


\begin{proof}
  It remains only to show that (\ref{eqn}) has a local solution bounded in $L^q(\Omega)$ when (\ref{finite}) holds. In this case it follows that
there exists a constant $C$ such that
$$
  f(s)\le C(1+s^p),\quad\mbox{where}\quad p=1+2q/d,
$$
  and now one can use comparison (Theorem 1 in Robinson \& Sier{\.z}\polhk{e}ga \cite{RS2}) and standard existence results for the equation
$$
  u_t-\Delta u=C(1+u^p)
$$
  (Corollary 3.2 in Weissler \cite{W80}) to guarantee that (\ref{eqn}) has the local $L^q(\Omega)$ existence property.
\end{proof}

One could rephrase the above result in terms of the quantity
$$
\gamma^\star=\sup\{\gamma\ge0:\ \limsup_{s\to\infty}s^{-\gamma}f(s)=\infty\}.
$$
With $q^\star=d(\gamma^\star-1)/2$ equation (\ref{eqn}) does not enjoy local existence for all non-negative initial data in $L^q$ for $q<q^\star$, but does for $q>q^\star$. In this way $q^\star$ defines a `critical exponent' for the general class of non-decreasing $f$ we consider here. Provided that $q^\star>1$ local existence/non-existence in the critical space $L^{q^\star}$ is determined by the behaviour of
$$
\limsup_{s\to\infty}s^{-\gamma^\star}f(s).
$$
When $q^\star=1$ the situation is more delicate and somewhat surprising.


\section{Initial data in $L^1(\Omega)$}

\subsection{A condition for non-existence of a local $L^1$ solution}\label{noL1}

  As just remarked, the behaviour of solutions for initial data in $L^1(\Omega)$ is more delicate. Celik \& Zhou \cite{CZ} showed that when $f(s)=s^{1+2/d}$, there is initial data in $L^1(\Omega)$ for which there is no local $L^1$ solution. This suggests that when $q=1$ the requirement of Theorem \ref{T1} can be weakened. Indeed, the requirement that the sum in (\ref{ellkinf}) diverges is clearly weaker than the asymptotic condition,
$$
  \limsup_{s\to\infty}s^{-(1+2/d)}f(s)>0;
$$
  blowup can even occur for certain $f$ for which the above $\limsup$ is zero, such as  $f(s)=s^{1+2/d}/\log(\e+s)^\beta$ with $0<\beta\le1$.
In particular, algebraic growth $f(s)=s^{1+2/d}$ is not in fact the true `boundary' for $L^1$ blowup. We examine this example in a little more detail in Section \ref{logex}.

Note that in the statement of the following theorem we do not include the hypothesis that $f$ is continuous; this is not required for this blowup result.

\begin{theorem}\label{Todd}
Suppose that $f:[0,\infty)\to[0,\infty)$ is non-decreasing and that there exists a sequence $\{s_k\}$ such that
$$
s_{k+1}\ge\theta s_k,\qquad\theta>1,
$$
and
\begin{equation}\label{ellkinf}
\sum_{k=1}^\infty s_k^{-p}f(s_k)=\infty,
\end{equation}
where $p=1+\frac{2}{d}$. Then there exists a non-negative initial condition $u_0\in L^1(\Omega)$ such that
\begin{equation}\label{ours}
u_t-\Delta u=f(u),\qquad u|_{\partial\Omega}=0,\qquad u(x,0)=u_0
\end{equation}
has no local integral solution that remains in $L^1_{\rm loc}(\Omega)$ for $t>0$ (so in particular no local $L^1$ solution exists).
\end{theorem}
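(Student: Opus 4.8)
The plan is to adapt the construction of Theorem \ref{T1} but to exploit the full divergence of the series \eqref{ellkinf} rather than a single large value of $f$. The idea is to build $u_0$ as a superposition of characteristic functions of small balls, one for each index $k$, where the $k$-th ball has height of order $s_k$ (up to the constant $\beta_d^{-1}$) and a radius $r_k$ chosen so that (i) the $L^1$-norm of the $k$-th bump equals a prescribed amount $m_k$, with $\sum_k m_k<\infty$ so that $u_0\in L^1(\Omega)$; and (ii) after running the heat flow for the natural time $t_k=r_k^2$, Corollary \ref{notail} forces $u(t_k)$ to dominate $\beta_d t_k f(s_k)\chi_{r_k}$ on the $k$-th ball, contributing an amount $\gtrsim r_k^{d+2} f(s_k) = c\, m_k\, s_k^{-1}\, r_k^2\, f(s_k)$ to $\|u(t_k)\|_{L^1}$. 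The lacunarity condition $s_{k+1}\ge\theta s_k$ is what allows the different scales to be treated independently: because the balls will have rapidly decreasing radii and disjoint (or nearly disjoint) supports, and because the relevant times $t_k$ will be well-separated, the bump at scale $k$ does not interfere with the blow-up mechanism at scale $k$.

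Concretely, I would fix the $L^1$ masses to be $m_k = k^{-2}$ (so $\sum m_k<\infty$) and then solve $\beta_d^{-1} s_k \omega_d r_k^d = m_k$ for the radius, giving $r_k = (\beta_d m_k/(\omega_d s_k))^{1/d} \asymp (s_k k^2)^{-1/d}$; one shrinks an overall constant $\varepsilon$ so that $B_{2r_k}\subset\Omega$ for all $k$, which is possible since $r_k\to 0$. Set $u_0 = \sum_k \beta_d^{-1} s_k \chi_{r_k}$. Running the argument of Theorem \ref{T1} verbatim with $\phi_k$ replaced by $s_k$: from $u(t)\ge S(t)u_k$, Corollary \ref{notail} with $\delta=r_k$ gives $S(s)u_k \ge s_k \chi_{r_k}$ for $0\le s\le r_k^2$, hence $f(S(s)u_k)\ge f(s_k)\chi_{r_k}$, and a second application of Corollary \ref{notail} inside the Duhamel integral \eqref{easylb} yields, for $t\in[t_k/2,t_k]$ with $t_k=r_k^2$,
$$
u(t) \ge \int_0^{t} S(t-s) f(S(s)u_k)\,\d s \ge \tfrac12\,\beta_d\, t_k\, f(s_k)\,\chi_{r_k}.
$$
Therefore $\|u(t)\|_{L^1(\Omega)} \ge c\, r_k^{d+2} f(s_k) = c'\, m_k\, s_k^{-1} r_k^2 f(s_k)$; substituting $r_k^2 \asymp (s_k k^2)^{-2/d}$ and $m_k=k^{-2}$ gives a lower bound of the form $c''\, k^{-a}\, s_k^{-(1+2/d)} f(s_k) = c''\, k^{-a}\, s_k^{-p} f(s_k)$ for some fixed $a=a(d)>0$.

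The remaining point is to convert the divergence of $\sum_k s_k^{-p} f(s_k)$ into genuine non-existence despite the polynomial factor $k^{-a}$. This is where the lacunarity $s_{k+1}\ge\theta s_k$ earns its keep and is the main obstacle: one must pass from a divergent series to a divergent subsequence of terms. The clean way is to group indices into blocks and observe that within a block on which $\sum s_k^{-p}f(s_k)$ is large, the terms $s_k^{-p}f(s_k)$ cannot all be small relative to the block length, and then use that geometric growth of $s_k$ (hence of the block sizes one can afford) makes the polynomial penalty $k^{-a}$ negligible compared with the mass one accumulates; more simply, one can replace the original sequence by a sparser sublacunary sequence along which the individual terms $s_{k_j}^{-p}f(s_{k_j})$ are bounded below by a positive constant — this is always possible when a nonnegative series diverges — so that along $t_{k_j}\to 0$ one has $\|u(t_{k_j})\|_{L^1(\Omega)}\to\infty$ (a summation-by-parts/grouping argument handles the $k^{-a}$ factor since one is free to space the $k_j$ out geometrically). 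Since $t_k\to0$, this shows $u(t)\notin L^1_{\rm loc}(\Omega)$ for arbitrarily small $t>0$, so no local integral solution remaining in $L^1_{\rm loc}$ can exist, and a fortiori no local $L^1$ solution. (One minor technical care, already flagged in the footnote to Theorem \ref{T1}, is that the lower bound must hold on an interval $[t_k/2,t_k]$ rather than at a single instant, since the solution is only required to lie in $L^1$ for a.e.\ $t$; the estimate above is stated in that form.) I would also note that continuity of $f$ is not used anywhere, consistent with the remark preceding the theorem.
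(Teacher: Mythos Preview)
Your argument has a genuine gap at the final step, and the approach of using one bump per index $k$ cannot be repaired to prove the theorem in full generality. With your choices, the lower bound obtained at time $t_k=r_k^2$ from the $k$-th bump alone is $\|u(t_k)\|_{L^1}\gtrsim m_k^{p}\,s_k^{-p}f(s_k)$, where $m_k$ is the mass of the $k$-th bump and $\sum_k m_k<\infty$. For the theorem's key example $f(s)=s^{p}/\log(\e+s)$, along $s_k=\theta^k$ one has $s_k^{-p}f(s_k)\asymp 1/k$, so the series diverges but the individual terms tend to zero; your bound then reads $\|u(t_k)\|_{L^1}\gtrsim m_k^{p}/k\to 0$, regardless of any choice of summable masses $m_k$. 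The claim that ``one can replace the original sequence by a sparser subsequence along which the individual terms are bounded below by a positive constant --- this is always possible when a nonnegative series diverges'' is simply false (the harmonic series is a counterexample, and is exactly what arises in the example above). The blocking idea fails for the same reason: within a block of length $L$ the best term is only of size $\gtrsim 1/L$, which does not beat the $m_k^{p}$ penalty.

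The paper's proof recovers the divergent series not term-by-term but as a genuine \emph{sum over $k$ for a single bump}. The crucial tool is Lemma~\ref{cflb}, which (unlike Corollary~\ref{notail}) gives a lower bound on $S(s)\chi_r$ valid for all $0<s\le\delta^2$ with a decaying factor $(r/(r+\sqrt s))^d$. For a single bump $v_0=\psi\alpha^d\chi_{1/\alpha}$ one identifies, for each $k$, the time interval $(t_{k+1},t_k]$ during which $S(s)v_0\ge c_d\phi_k$; integrating the Duhamel term over $s$ and summing in $k$ yields a bound proportional to $\psi^{p}\sum_k s_k^{-p}f(s_k)$ over a range of $k$ determined by $\psi,\alpha$ and $t$. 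The initial data is then $\sum_n u_n$ with $u_n$ having mass $n^{-2}$ and a height parameter $\zeta_n$ chosen so large that $n^{-2p}\sum_{k\le k_n}s_k^{-p}f(s_k)\to\infty$ --- which is always possible precisely because the full series diverges. This is the missing idea: the lacunarity is used not to separate bumps from one another, but to guarantee that the contributions $\int_{t_{k+1}}^{t_k}s^{d/2}\,\d s\gtrsim \phi_k^{-p}$ from consecutive level sets of a \emph{single} spreading bump are comparable to the terms of the series.
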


Before we give the proof proper, it is instructive to present a much simplified argument for the `$L^1$-like' initial data $u_0=\delta_0$, a delta function centred at the origin (such data is $L^1$-like so far as $\int \delta_0=1$). To further simplify the argument we pose the problem on the whole space $\R^d$.

Since in this case
$$
[S(s)\delta_0](x)=(4\pi s)^{-d/2}\e^{-|x|^2/4s}
$$
it follows that for each $k$,
$$
S(s)\delta_0(x)\ge\phi_k\chi_{\sqrt s}\qquad\mbox{for}\quad s\le t_k:=c\phi_k^{-2/d}
$$
(where $c=\e^{-1/2d}/4\pi$).

Now for any $t>0$, using the fact that $\int_{\R^d} S(t)\chi_r=\omega_dr^d$,
\begin{align*}
\int_{\R^d}\int_0^t S(t-s)f(S(s)\delta_0)\,\d s
&\ge \int_{\R^d}\sum_k\int_{t_{k+1}}^{t_k} S(t-s)f(S(s)\delta_0)\,\d s\\
&\ge \sum_kf(\phi_k)\int_{t_{k+1}}^{t_k}\int_{\R^d} S(t-s)\chi_{\sqrt s}\,\d s\\
&=\omega_d\sum_kf(\phi_k)\int_{t_{k+1}}^{t_k} s^{d/2}\,\d s\\
&=c\omega_d\sum_kf(\phi_k) (t_k^{(2+d)/2}-t_{k+1}^{(2+d)/2})\\
(t_k=c\phi_k^{-2/d})\qquad\qquad&\ge c'\omega_d\sum_kf(\phi_k) (\phi_k^{-p}-\phi_{k+1}^{-p})\\
(\phi_{k+1}\ge\theta\phi_k)\qquad\qquad&\ge c'(1-\theta^{-p})\omega_d\sum_k f(\phi_k)\phi_k^{-p}=\infty,
\end{align*}
using (\ref{ellkinf}). The proof of Theorem \ref{Todd} will follow very similar lines.

\begin{proof} (Theorem \ref{Todd}.)
Define $\phi_k=c_d^{-1}s_k$, and set
$$
u_n(x)=\frac{1}{n^2}\alpha_n^d\chi(1/\alpha_n)\qquad\mbox{where}\qquad
\alpha_n=(n^2\phi_{\zeta_n})^{1/d},
$$
with $\zeta_n$ to be chosen later. Let
$$
u_0(x)=\sum_{n=n_0}^\infty u_n(x),
$$
with $n_0$ chosen such that
$$
\frac{1}{\alpha_{n_0}}<\delta_0:=\frac{1}{2}\inf_{x\in\partial\Omega}|x|.
$$
Note that $1/\alpha_n\le\delta_0$ and so $B_{1/\alpha_n+\delta_0}\subset\Omega$ for all $n\ge n_0$, and that
$$
\|u_0\|_{L^1}\le\omega_d\sum_{n=1}^\infty n^{-2}<\infty.
$$
Arguing as in the proof of Theorem \ref{T1}, for any choice of $n$ we have
$$
\int_\Omega u(t;u_0)\,\d x\ge \int_\Omega\int_0^t S(t-s)f(S(s)u_n)\,\d s\,\d x.
$$

We now consider the action of the heat semigroup on the initial data $v_0=\psi\alpha^d\chi_{1/\alpha}$. It follows from Lemma \ref{cflb} with $r=1/\alpha$ and $\delta=\delta_0$ that
$$
S(s)v_0\ge c_d\psi\frac{\alpha^d}{(1+\alpha^2 s)^{d/2}}\chi_{(1/\alpha)+\sqrt s},
$$
so $[S(s)v_0](x)\ge c_d\phi_k$ for
$$
|x|\le \frac{1}{\alpha}+\sqrt s\quad\mbox{while}\quad s\le t_k=\min\left(\delta_0^2,\left(\frac{\psi}{\phi_k}\right)^{2/d}-\frac{1}{\alpha^2}\right),
$$
and this range is non-empty provided that $\phi_k\le\psi\alpha^d$.

Now for any $0<t<\delta_0^2$, using Corollary \ref{norm1} we have
\begin{align*}
\int_\Omega\int_0^t S(t-s)f(S(s)v_0)&\,\d s\,\d x\ge \sum_k\int_\Omega\int_{t_{k+1}}^{t_k} S(t-s)f(S(s)v_0)\,\d s\,\d x\\
&=\sum_k\int_{t_{k+1}}^{t_k}\int_\Omega S(t-s)f(S(s)v_0)\,\d x\,\d s\\
&\ge c'\sum_kf(c_d\phi_k)\int_{t_{k+1}}^{t_k}\int_\Omega S(t-s)\chi_{\alpha^{-1}+\sqrt s}\,\d x\,\d s\\
&\ge \alpha_d c'\sum_kf(c_d\phi_k)\int_{t_{k+1}}^{t_k}\left(\frac{1}{\alpha}+\sqrt s\right)^d\,\d s\\
&\ge c''\sum_kf(c_d\phi_k)\int_{t_{k+1}}^{t_k} s^{d/2}\,\d s,
\end{align*}
where the sum in $k$ is taken over those values for which
$$
\frac{1}{\alpha^d}\le\frac{\psi}{\phi_k}\le \left(t+\frac{1}{\alpha^2}\right)^{d/2}.
$$

Let us consider $k$ that satisfy this requirement and the additional constraint that $\phi_{k+1}/\alpha^d\psi\le 1/2$. For each such $k$ we have
\begin{align*}
\int_{t_{k+1}}^{t_k}& s^{d/2}\,\d s
=\frac{2}{2+d}(t_k^{d/2+1}-t_{k+1}^{d/2+1})\\
&=\frac{2}{2+d}\left\{\left[\left(\frac{\psi}{\phi_k}\right)^{2/d}-\frac{1}{\alpha^2}\right]^{d/2+1}
-\left[\left(\frac{\psi}{\phi_{k+1}}\right)^{2/d}-\frac{1}{\alpha^2}\right]^{d/2+1}\right\}\\
&=\frac{2}{2+d}\left(\frac{\psi}{\phi_k}\right)^{1+2/d}\left[\left(1-\left(\frac{\phi_k}{\alpha^d\psi}\right)^{2/d}\right)^{d/2+1}\right.\\
&\qquad\qquad\qquad\qquad\qquad\qquad\left.
-\frac{\phi_k^{1+2/d}}{\phi_{k+1}^{1+2/d}}\left(1-\left(\frac{\phi_{k+1}}{\alpha^d\psi}\right)^{2/d}\right)^{d/2+1}\right]\\
&\ge\frac{2}{2+d}\left(\frac{\psi}{\phi_k}\right)^{1+2/d}\left(1-\frac{\phi_k^{1+2/d}}{\phi_{k+1}^{1+2/d}}\right)
\left(1-\left(\frac{\phi_{k+1}}{\alpha^d\psi}\right)^{2/d}\right)^{d/2+1}\\
&\ge \sigma\left(\frac{\psi}{\phi_k}\right)^{1+2/d},
\end{align*}
using the facts that $\phi_{k+1}\ge\theta\phi_k$ and $\phi_{k+1}/\alpha^d\psi\le 1/2$. So certainly
\begin{align*}
\int_\Omega\int_0^t S(t-s)f(S(s)v_0)\,\d s&\ge c''\sigma\sum_kf(c_d\phi_k)\left(\frac{\psi}{\phi_k}\right)^{1+2/d}\\
&=c'''\psi^p\sum_k f(s_k)s_k^{-p},
\end{align*}
where the sum is taken over
\begin{equation}\label{tofindk}
\left\{k:\ \frac{2}{\alpha^d}\le\frac{\psi}{\phi_{k+1}}<\frac{\psi}{\phi_k}\le \left(t+\frac{1}{\alpha^2}\right)^{d/2}\right\}.
\end{equation}

For any fixed $t$ with $0<t<\delta_0^2$, once $n$ is sufficiently large that $tn^{4/d}\ge 1$ the set in (\ref{tofindk}) with $\psi=n^{-2}$ and $\alpha=\alpha_n=(n^2\phi_{\zeta_n})^{1/d}$ certainly contains
$$
\{k:\ 1\le\phi_k\qquad\mbox{and}\qquad \phi_{k+1}\le {\textstyle\frac{1}{2}}\phi_{\zeta_n}\}=\{k:\ k_0\le k\le k_n\},
$$
where $k_0$ is the smallest value of $k$ for which $\phi_k\ge 1$ and by choosing $\zeta_n$ such that $\phi_{k_n+1}\le\frac{1}{2}\phi_{\zeta_n}$ we can achieve any desired sequence $k_n$.

Since $\sum_{k=1}^\infty f(s_k)s_k^{-p}=\infty$ (by (\ref{ellkinf})) we can choose $k_n$ such that
$$
n^{-2p}\sum_{k=k_0}^{k_n}f(s_k)s_k^{-p}
$$
diverges as $n\to\infty$.\end{proof}

We note that if we assume in addition
that $f(s)\ge cs$ for some $c>0$, then under the conditions in Theorem \ref{Todd} there is in fact no local integral solution of (\ref{ours}). Indeed, suppose that there is a local integral solution $u:\Omega\times[0,T)\to[0,\infty)$. Then by Definition \ref{lis}, $u$ is finite almost everywhere on $\Omega\times[0,T)$. Since all our estimates are performed within $B_{\delta_0}$, we have in fact shown that there is a ball $B\subset\Omega$ and a time $\delta_0^2>0$ such that
$$
\int_B u(y,s)\,\d y=\infty\qquad\mbox{for all}\quad s\in(0,\delta_0^2).
$$
Now fix $\tau=\min(T,\delta_0^2)$ and choose any $(x,t)\in B\times[\tau/2,\tau]$. Since $u$ satisfies (\ref{eq:mild}),
\begin{align*}
u(x,t)&\ge\int_0^t\int_\Omega K(x,y;t-s)f(u(y,s))\,\d y\,\d s\\
&\ge c\int_0^{\tau/4}\int_B K(x,y;t-s)u(y,s)\,\d y\,\d s,
\end{align*}
using the assumption that $f(s)\ge cs$. For $s\in[0,\tau/4]$ and $t\in[\tau/2,\tau]$ we have $t-s\in[\tau/4,\tau]$. By the continuity and positivity of $K$ there exists $\kappa>0$ such that
$$
K(x,y;\sigma)\ge\kappa\qquad\mbox{for all}\quad (x,y,\sigma)\in B\times B\times [\tau/4,\tau],
$$
whence
$$
u(x,t)\ge c\kappa\int_0^{\tau/4}\int_B u(y,s)\,\d y\,\d s=\infty.
$$
Therefore $u=\infty$ on $B\times[\tau/2,\tau]$, contradicting the requirement that $u$ is finite almost everywhere on $\Omega\times[0,T)$.

\subsection{An equivalent integral condition for blowup}

Since the condition in (\ref{ellkinf}) is potentially awkward to check in practice, we now formulate an equivalent integral condition. Note that when $f(s)/s$ is non-decreasing, the integral condition in (ii) of the lemma below becomes the more conventional
$$
\int_1^\infty s^{-(1+p)}f(s)\,\d s=\infty.
$$

\begin{lemma}\label{equivcond}
Suppose that $f:[0,\infty)\to[0,\infty)$ is non-decreasing and $p>1$. Then the following two conditions are equivalent.
\begin{itemize}
\item[(i)] There exists a sequence $\{s_k\}$ such that $s_{k+1}\ge\theta s_k$, $\theta>1$ and
$$
    \sum_{k=1}^\infty s_k^{-p}f(s_k)=\infty.
$$
\item[(ii)] $\displaystyle{
\int_1^\infty s^{-p}F(s)\,\d s=\infty,\qquad\mbox{where}\qquad F(s)=\sup_{1\le t\le s}\frac{f(t)}{t}.
}$
\end{itemize}
\end{lemma}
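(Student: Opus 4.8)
The plan is to prove the two implications separately; the forward implication is routine, and essentially all the work is in (ii)$\Rightarrow$(i).

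For (i)$\Rightarrow$(ii): after discarding the finitely many terms with $s_k<1$ (harmless, since the series has non-negative terms and each discarded term is finite) we may assume $s_1\ge1$. Using that $F$ is non-decreasing and that $F(s_k)\ge f(s_k)/s_k$ gives $\int_{s_k}^{s_{k+1}}s^{-p}F(s)\,\d s\ge \frac{f(s_k)}{s_k}\int_{s_k}^{s_{k+1}}s^{-p}\,\d s=\frac{f(s_k)}{s_k}\cdot\frac{s_k^{1-p}-s_{k+1}^{1-p}}{p-1}$; since $x\mapsto x^{1-p}$ is decreasing and $s_{k+1}\ge\theta s_k$ one has $s_k^{1-p}-s_{k+1}^{1-p}\ge(1-\theta^{1-p})s_k^{1-p}$, so the integral over $[s_k,s_{k+1}]$ is bounded below by a fixed positive multiple of $s_k^{-p}f(s_k)$. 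Summing over $k$ yields divergence of $\int_1^\infty s^{-p}F(s)\,\d s$.

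For (ii)$\Rightarrow$(i) I would first discretise. Fix any $\theta>1$. Since $F$ is non-decreasing, $\int_{\theta^{k}}^{\theta^{k+1}}s^{-p}F(s)\,\d s\le F(\theta^{k+1})\int_{\theta^{k}}^{\theta^{k+1}}s^{-p}\,\d s$, which is a constant multiple of $\theta^{(k+1)(1-p)}F(\theta^{k+1})$; summing over $k$, divergence of the integral forces $\sum_{j\ge1}\theta^{j(1-p)}F(\theta^{j})=\infty$. Next I replace $F(\theta^j)$ by a quantity built from values of $f$ alone: since $f$ is non-decreasing, every $t\in[\theta^{i-1},\theta^{i}]$ satisfies $f(t)/t\le\theta\cdot\theta^{-i}f(\theta^{i})$, hence $F(\theta^{j})\le\theta M_j$ with $M_j:=\max_{1\le i\le j}\theta^{-i}f(\theta^{i})$, while $M_j\le F(\theta^{j})$ is immediate; thus $\sum_{j}\theta^{j(1-p)}M_j=\infty$. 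The heart of the matter is then an elementary lemma about running maxima that I would isolate: if $q\in(0,1)$, $(c_i)_{i\ge1}$ are non-negative, and $M_j:=\max_{1\le i\le j}c_i$, then $\sum_j q^jM_j=\infty$ implies $\sum_j q^jc_j=\infty$. To prove it, let $i_1<i_2<\cdots$ be the indices at which the running maximum takes a strictly larger value; the half-open intervals $[i_m,i_{m+1})$ partition the positive integers, $M_j=c_{i_m}$ on each of them, and therefore $\sum_j q^jM_j=\sum_m c_{i_m}\sum_{i_m\le j<i_{m+1}}q^j\le(1-q)^{-1}\sum_m q^{i_m}c_{i_m}\le(1-q)^{-1}\sum_j q^jc_j$. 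Applying this with $q=\theta^{1-p}\in(0,1)$ and $c_i=\theta^{-i}f(\theta^{i})$ gives $\sum_j(\theta^{j})^{-p}f(\theta^{j})=\infty$, so the geometric sequence $s_k:=\theta^{k}$, which satisfies $s_{k+1}=\theta s_k$, witnesses (i).

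I expect the running-maximum step to be the only real obstacle. The naive route — choosing for each $j$ a near-maximiser $\tau_j\in[1,\theta^j]$ with $f(\tau_j)/\tau_j$ comparable to $F(\theta^j)$ — fails, because the resulting $\tau_j$ need not be lacunary (they need not even be monotone) and a single bounded interval could a priori contain infinitely many of them. The record-index bookkeeping above avoids this: because the weights $q^j$ decay geometrically, a plateau of the running maximum contributes no more, up to the factor $(1-q)^{-1}$, than its leftmost index alone, and that index carries a genuine value of the sequence, which is exactly what is needed to recover a lacunary subsequence of $f$-values with divergent weighted sum.
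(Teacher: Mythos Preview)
Your proof is correct and follows essentially the same strategy as the paper's: both directions discretise on the geometric grid $\theta^k$, and for (ii)$\Rightarrow$(i) both handle the difficulty that the supremum defining $F(\theta^j)$ may be attained far to the left by grouping over plateaus of the running maximum and exploiting the geometric decay of the weights $\theta^{j(1-p)}$. Your packaging is cleaner in two places. For (i)$\Rightarrow$(ii) you avoid the paper's preliminary augmentation of $\{s_k\}$ to a sequence with bounded ratios $\sigma_{k+1}/\sigma_k\le\theta^\alpha$; that step is in fact unnecessary once one uses $F(s_k)\ge f(s_k)/s_k$ directly (the paper uses the weaker $F(\sigma_k)\ge f(\sigma_k)/\sigma_{k+1}$, which is why it needs the upper ratio bound). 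For (ii)$\Rightarrow$(i) you isolate the plateau argument as a standalone lemma on running maxima with geometric weights, which lets you conclude that the \emph{full} geometric sequence $s_k=\theta^k$ already witnesses (i); the paper carries out the same bookkeeping inline with its indices $k_n$ and $n_j$ and ends up extracting only a subsequence $s_j=\theta^{k_{n_j}+1}$.
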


\begin{proof}
First we show that (i) implies (ii). We can augment the sequence $\{s_k\}$ to a new sequence $\sigma_k$ such that
$$
\sum_{k=0}^\infty\sigma_k^{-p}f(\sigma_k)=\infty
$$
and in addition, choosing $1<\alpha<p$,
$$
1<\theta\le\frac{\sigma_{k+1}}{\sigma_k}\le\theta^\alpha,
$$
by including points $\theta^js_k$ until $s_{k+1}\le\theta^{j+p-1}s_k$.

Setting $\sigma_0=1$ we can write
\begin{align*}
\int_1^\infty s^{-p}F(s)\,\d s&=\sum_{k=0}^\infty\int_{\sigma_k}^{\sigma_{k+1}} s^{-p}F(s)\\
&\ge\sum_{k=0}^\infty\int_{\sigma_k}^{\sigma_{k+1}}s^{-p}F(\sigma_k)\,\d s\\
&\ge\frac{1}{p-1}\sum_{k=0}^\infty (\sigma_k^{-(p-1)}-\sigma_{k+1}^{-(p-1)})\frac{f(\sigma_k)}{\sigma_{k+1}}\\
&=\frac{1}{p-1}\sum_{k=0}^\infty \sigma_k^{-p}f(\sigma_k)\left\{\frac{\sigma_k}{\sigma_{k+1}}-\frac{\sigma_k^p}{\sigma_{k+1}^p}\right\}\\
&\ge\frac{1}{p-1}(\theta^{-\alpha}-\theta^{-p})\sum_{k=0}^\infty\sigma_k^{-p}f(\sigma_k),
\end{align*}
from which (ii) follows.

We now show that (ii) implies (i). Choose $\theta>1$ and for $k=0,1,2,\ldots$ let $\sigma_k=\theta^k$; note that $F(s)\le F(\sigma_{n+1})$ for all $s\in(\sigma_n,\sigma_{n+1}]$. There exists a sequence $\{k_n\}$ with $k_n\le n$ and $k_{n+1}\ge k_n$ such that $F(\sigma_{n+1})=f(\tau_n)/\tau_n$ for some $\tau_n\in(\sigma_{k_n},\sigma_{k_n+1}]$. Thus
$$
F(s)\le F(\sigma_{n+1})\le\frac{f(\sigma_{k_n+1})}{\sigma_{k_n}}\qquad\mbox{for all}\quad s\in(\sigma_n,\sigma_{n+1}].
$$
Therefore
$$
\int_1^\infty s^{-p}F(s)\,\d s=\sum_{n=1}^\infty \int_{\sigma_n}^{\sigma_{n+1}} s^{-p}F(s)\,\d s\le\sum_{n=1}^\infty \frac{f(\sigma_{k_n+1})}{\sigma_{k_n}}\int_{\sigma_n}^{\sigma_{n+1}}s^{-p}\,\d s.
$$
%
$$
%
$$

Now observe that there is an increasing sequence $n_j$ such that
$$
k_{n_j}=k_n<k_{n_{j+1}}\qquad\mbox{for}\quad n=n_j,\ldots,n_{j+1}-1,
$$
and so
\begin{align*}
\sum_{n=1}^\infty \frac{f(\sigma_{k_n+1})}{\sigma_{k_n}}\int_{\sigma_n}^{\sigma_{n+1}}s^{-p}\,\d s&=\sum_j \frac{f(\sigma_{k_{n_j}+1})}{\sigma_{k_{n_j}}}\int_{\sigma_{k_{n_j}}}^{\sigma_{k_{n_{j+1}}}}s^{-p}\,\d s\\
&<\sum_j\frac{f(\sigma_{k_{n_j}+1})}{\sigma_{k_{n_j}}}\int_{\sigma_{k_{n_j}}}^\infty s^{-p}\,\d s\\
&\le\frac{1}{p-1}\sum_{n=1}^\infty \frac{f(\sigma_{k_{n_j}+1})}{\sigma_{k_{n_j}}}\,\sigma_{k_{n_j}}^{1-p}\\
&=\frac{\theta^p}{p-1}\sum_{j=1}^\infty\sigma_{k_{n_j}+1}^{-p}f(\sigma_{k_{n_j}+1}).
\end{align*}
Taking $s_j=\sigma_{k_{n_j}+1}$ yields (i).\end{proof}

\subsection{An integral condition for local existence}

We now show that the integral condition in (ii) of Lemma \ref{equivcond} is sufficient for the $L^1$ local existence property. We will use the following theorem from Robinson \& Sier{\.z}\polhk{e}ga \cite{RS2} (Theorem 1, after Weissler \cite{W81}) which guarantees the existence of a solution $u(t)$ of (\ref{nhe}) given the existence of a supersolution $v(t)$, i.e.\ a function satisfying (\ref{supersol}). For later use we remark that $\Omega=\R^d$, with $S(t)$ denoting the action of the heat semigroup (defined by convolution with the Gaussian kernel) is an admissible choice in Theorem \ref{MS} (see discussion in the `Final comments' in \cite{RS2}).

  \begin{theorem}\label{MS}
   Take $u_0\ge0$. If $f:[0,\infty)\to[0,\infty)$ is continuous and non-decreasing and there exists a $v\in L^1((0,T)\times\Omega)$ such that
  \begin{equation}\label{supersol}
  S(t)u_0+\int_0^t S(t-s)f(v(s))\,\d s\le v(t)\qquad\mbox{for all}\quad t\in[0,T]
  \end{equation}
  then there exists a local integral solution $u$ of (\ref{nhe}) on $[0,T]$ with $u(x,t)\le v(x,t)$ for all $x\in\Omega$ and $t\in[0,T]$.
  \end{theorem}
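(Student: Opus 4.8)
The plan is to construct $u$ as the limit of the standard monotone iteration scheme trapped beneath the supersolution $v$. Define $u^{(0)}(t)=S(t)u_0$ and, for $n\ge 0$,
$$
u^{(n+1)}(t)=S(t)u_0+\int_0^t S(t-s)f(u^{(n)}(s))\,\d s .
$$
Each $u^{(n)}$ is non-negative and (by induction, using Tonelli for the convolution integral) measurable. Since $f\ge0$ and $S(t-s)$ acts by integration against the non-negative kernel $K_\Omega$, it is order-preserving; and (\ref{supersol}) gives in particular $v(t)\ge S(t)u_0=u^{(0)}(t)$. One then shows by induction that
$$
0\le u^{(0)}(x,t)\le u^{(1)}(x,t)\le\cdots\le u^{(n)}(x,t)\le v(x,t)\qquad\text{for a.e.\ }(x,t):
$$
monotonicity in $n$ follows because $f$ is non-decreasing and $S(t-s)$ order-preserving (so $u^{(n-1)}\le u^{(n)}$ propagates through the iteration), and the bound by $v$ follows by feeding $u^{(n)}\le v$ into the iteration and comparing with (\ref{supersol}).

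Because $u^{(n)}(x,t)$ is non-decreasing in $n$ and bounded above by $v(x,t)<\infty$ a.e., it converges pointwise a.e.\ to a measurable limit $u$ with $0\le u\le v$; in particular $u\in L^1((0,T)\times\Omega)$, so $u$ is finite almost everywhere. It then remains to pass to the limit in the iteration. Since $f$ is continuous and non-decreasing and $u^{(n)}(s)\uparrow u(s)$, we have $f(u^{(n)}(s))\uparrow f(u(s))$ pointwise a.e.; the monotone convergence theorem (applicable since $K_\Omega\ge0$) yields $\int_0^t S(t-s)f(u^{(n)}(s))\,\d s\to\int_0^t S(t-s)f(u(s))\,\d s$, while the outer limit $u^{(n+1)}(t)\to u(t)$ is immediate. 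Hence
$$
u(t)=S(t)u_0+\int_0^t S(t-s)f(u(s))\,\d s\qquad\text{for a.e.\ }(x,t),
$$
so $u$ is a local integral solution on $[0,T]$ in the sense of Definition \ref{lis}, and $u\le v$ by construction.

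The only genuinely delicate points are the integrability and measurability bookkeeping, and this is precisely where the supersolution hypothesis is indispensable: inequality (\ref{supersol}) provides a single integrable envelope bounding $\int_0^t S(t-s)f(v(s))\,\d s$, hence a fortiori every $\int_0^t S(t-s)f(u^{(n)}(s))\,\d s$, which keeps all iterates finite a.e.\ and legitimises the monotone-convergence passage to the limit. No fixed-point theorem or Lipschitz assumption is needed; continuity of $f$ enters only to identify $\lim_n f(u^{(n)})$ with $f(u)$.
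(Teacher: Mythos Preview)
Your argument is correct. It differs from the paper's sketch only in the direction of the monotone iteration: the paper starts at the supersolution $v_0=v$ and iterates \emph{downwards}, obtaining a decreasing sequence $v_{n+1}=\mathscr F(v_n)$ bounded below by $S(t)u_0$, whereas you start at $u^{(0)}=S(t)u_0$ and iterate \emph{upwards}, obtaining an increasing sequence bounded above by $v$. Both schemes are standard and both pass to the limit via monotone convergence; your version has the slight advantage that the usual form of the monotone convergence theorem (for increasing sequences) applies directly, while the paper's decreasing scheme requires noting that the first iterate is dominated by the integrable $v$. Conversely, the paper's iteration produces the \emph{maximal} integral solution below $v$, while yours produces the \emph{minimal} one above $S(t)u_0$; for the purposes of this theorem (mere existence of some solution $u\le v$) the two are interchangeable.
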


  This theorem is proved by constructing a sequence of supersolutions $v_n(t)$ defined by setting $v_0(t)=v(t)$ and
$$
  v_{n+1}(t)={\mathscr F}(v_n):=S(t)u_0+\int_0^t S(t-s)f(v_n(s))\,\d s.
$$
  Such a sequence is monotonically decreasing, is bounded below by $S(t)u_0$, and hence has a pointwise limit $u(t)$ which can be shown to satisfy
$$
   S(t)u_0+\int_0^t S(t-s)f(u(s))\,\d s=u(t)\qquad\mbox{for all}\quad t\in[0,T]
$$
 using the Monotone Convergence Theorem.

  Using this result we prove a local existence theorem; the argument is adapted from the proof of Proposition 7.2 in Sier{\.z}\polhk{e}ga \cite{MT}. Note that our standing assumption that $\Omega$ is bounded is an important ingredient in the proof, since we require $\chi_\Omega\in L^1(\Omega)$.

\begin{theorem}\label{T4}
If $f:[0,\infty)\to[0,\infty)$ is continuous, non-decreasing, and
 \begin{equation}\label{Mikos2}
  \int_1^\infty s^{-(1+2/d)}F(s)\,\d s<\infty,\qquad\mbox{where}\qquad F(s)=\sup_{1\le t\le s}\frac{f(t)}{t}
  \end{equation}
  then for every non-negative $u_0\in L^1(\Omega)$ there exist a $T>0$ such that (\ref{eqn}) has a solution
$$
  u\in L^\infty_{\rm loc}((0,T);L^\infty(\Omega))\cap C^0([0,T];L^1(\Omega)).
$$
  In particular, (\ref{eqn}) has the local $L^1$ existence property.
\end{theorem}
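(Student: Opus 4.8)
The plan is to apply Theorem \ref{MS}: it suffices to construct, for some $T>0$ depending on $u_0$, a non-negative $v\in L^1((0,T)\times\Omega)$ that is finite almost everywhere and satisfies the supersolution inequality
$$
S(t)u_0+\int_0^t S(t-s)f(v(s))\,\d s\le v(t)\qquad\text{for all }t\in[0,T];
$$
Theorem \ref{MS} then produces a local integral solution $u$ with $0\le u\le v$, and quantitative bounds on $v$ give the stated regularity (and hence the local $L^1$ existence property). I would look for a supersolution of the form $v(t)=2\,S(t)u_0+g(t)\,\chi_\Omega$ for a scalar function $g\ge0$ to be chosen; note that $\chi_\Omega\in L^1(\Omega)$ precisely because $\Omega$ is bounded. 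The three tools needed are: (a) the smoothing bound $\|S(s)u_0\|_{L^\infty}\le(4\pi s)^{-d/2}\|u_0\|_{L^1}$; (b) the elementary pointwise inequality $f(w)\le f(1)+wF(w)$ (use $f(w)\le f(1)$ when $w\le1$ and $f(w)/w\le F(w)$ when $w>1$) together with the monotonicity of $F$; and (c) that the Dirichlet heat semigroup is a positivity-preserving contraction on $L^\infty(\Omega)$, so $S(\tau)[c\chi_\Omega]\le c\chi_\Omega$ for $c\ge0$ and $\|S(\tau)h\|_{L^1}\le\|h\|_{L^1}$.

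Writing $\kappa:=4(4\pi)^{-d/2}\|u_0\|_{L^1}$ and using $f(a+b)\le f(2a)+f(2b)$ (valid since $a+b\le2\max(a,b)$ and $f\ge0$), one gets, for $T$ small enough that $\kappa s^{-d/2}\ge1$ on $(0,T]$,
$$
f(v(s))\le f(4S(s)u_0)+f(2g(s))\chi_\Omega\le\big(f(1)+f(2g(s))\big)\chi_\Omega+4\,F(\kappa s^{-d/2})\,S(s)u_0 .
$$
The key point is that after applying $S(t-s)$ the last term does not lose the $L^1$-controlled quantity $S(t)u_0$, because of the semigroup identity $S(t-s)[S(s)u_0]=S(t)u_0$; hence
$$
\int_0^t S(t-s)f(v(s))\,\d s\le f(1)t\,\chi_\Omega+4G(t)\,S(t)u_0+\Big(\textstyle\int_0^t f(2g(s))\,\d s\Big)\chi_\Omega ,\qquad G(t):=\int_0^t F(\kappa s^{-d/2})\,\d s .
$$
Since $v(t)-S(t)u_0=S(t)u_0+g(t)\chi_\Omega$ and the functions $S(t)u_0$ and $\chi_\Omega$ are non-negative, the supersolution inequality holds as soon as $4G(t)\le1$ and $f(1)t+\int_0^t f(2g(s))\,\d s\le g(t)$.

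The first condition is met by taking $T$ small: the substitution $\sigma=\kappa s^{-d/2}$ turns $G(t)$ into $\tfrac{2}{d}\kappa^{2/d}\int_{\kappa t^{-d/2}}^\infty\sigma^{-(1+2/d)}F(\sigma)\,\d\sigma$, and (\ref{Mikos2}) forces $G(t)\to0$ as $t\to0$, so we may fix $T$ with $G(T)\le\tfrac14$. The second is met by letting $g$ solve the scalar ODE $g'=f(1)+f(2g)$ with $g(0)=1$: by continuity of $f$ this has an increasing, finite solution on some maximal interval $[0,T_1)$, and shrinking $T$ below $T_1$ makes $g$ bounded on $[0,T]$, so $v\in L^\infty((0,T);L^1(\Omega))\subset L^1((0,T)\times\Omega)$. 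Theorem \ref{MS} then gives $0\le u\le v$ on $[0,T]\times\Omega$. For the regularity, on $[\tau,T]$ with $\tau>0$ one has $\|v(t)\|_{L^\infty}\le2(4\pi\tau)^{-d/2}\|u_0\|_{L^1}+g(T)$, giving $u\in L^\infty_{\rm loc}((0,T);L^\infty(\Omega))$; and since $f(u(s))\le f(v(s))$ with $\int_0^T\|f(v(s))\|_{L^1}\,\d s<\infty$ (again by the same change of variables and (\ref{Mikos2})), both terms of $u(t)=S(t)u_0+\int_0^t S(t-s)f(u(s))\,\d s$ are continuous from $[0,T]$ into $L^1(\Omega)$, so $u\in C^0([0,T];L^1(\Omega))$.

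The step I expect to be the main obstacle is exactly the treatment of $\int_0^t S(t-s)f(2S(s)u_0)\,\d s$: the $L^\infty$ norm of $S(s)u_0$ blows up like $s^{-d/2}$, so estimating $f$ there by $f(\|S(s)u_0\|_{L^\infty})$ would yield $\int_0^t f(c\,s^{-d/2})\,\d s$, which diverges already for the model critical nonlinearity $f(s)=s^{1+2/d}$ — this is why the conventional spatially constant supersolution $a(t)\chi_\Omega$ cannot work at the critical growth rate. The resolution is to bound $f$ there only by the slowly growing scalar factor $F(\|S(s)u_0\|_{L^\infty})$ times $S(s)u_0$ itself, and then invoke $S(t-s)S(s)=S(t)$ to recover the $L^1$-bounded quantity $S(t)u_0$; this is precisely what converts the divergent time integral into the convergent $\int_1^\infty\sigma^{-(1+2/d)}F(\sigma)\,\d\sigma$ of (\ref{Mikos2}).
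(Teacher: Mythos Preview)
Your proof is correct and shares the paper's central insight: instead of bounding $f(S(s)u_0)$ by the divergent $f(\|S(s)u_0\|_{L^\infty})$, one factors out only the scalar $F(\|S(s)u_0\|_{L^\infty})$ and keeps the spatial profile $S(s)u_0$ inside the integral, so that the semigroup identity $S(t-s)S(s)u_0=S(t)u_0$ converts the time integral into the convergent $\int_1^\infty\sigma^{-(1+2/d)}F(\sigma)\,\d\sigma$.

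The tactical differences are worth noting. The paper first replaces $f$ by $\tilde f(s)=sF(s)$ for $s>1$, which has $\tilde f(s)/s$ non-decreasing; this lets one write $\tilde f(v)=\big(\tilde f(v)/v\big)\,v$ and bound the ratio by its $L^\infty$ norm in a single step. Consequently the paper can take the supersolution to be simply $v(t)=AS(t)u_0+\chi_\Omega$ with a \emph{constant} $\chi_\Omega$ term, and no auxiliary ODE is needed. Your route instead splits $f(a+b)\le f(2a)+f(2b)$, treats the $S(s)u_0$-part via the same $F$-factorisation, and absorbs the remaining ``small argument'' piece $f(1)+f(2g(s))$ into a time-dependent $g(t)$ solving $g'=f(1)+f(2g)$. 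This works, but the ODE and the extra splitting are avoidable overhead; the paper's $\tilde f$ trick buys a shorter argument with a simpler supersolution. Conversely, your decomposition makes transparent exactly which part of $f$ needs the integral condition (only the large-argument behaviour) and which part is harmless (handled by the local ODE), which is conceptually appealing.
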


\begin{proof}
If $u_0=0$ then $v(t)\equiv\chi_\Omega$ is a supersolution, since
$$
S(t)u_0+\int_0^t S(t-s)f(S(s)\chi_\Omega)\,\d s\le\int_0^t S(t-s)\{f(1)\chi_\Omega\}\le tf(1)\chi_\Omega\le\chi_\Omega
$$
for all $t$ sufficiently small.

To treat $u_0\neq0$, define $\tilde f(s)=f(s)$ for $s\in[0,1]$ and $\tilde f(s)=sF(s)$ for $s>1$. Then $f(s)\le\tilde f(s)$ and $\tilde f(s)/s:[1,\infty)\to[0,\infty)$ is non-decreasing. In particular, any supersolution for the equation
\begin{equation}\label{tildeeqn}
u_t-\Delta u=\tilde f(u)
\end{equation}
is also a supersolution for (\ref{eqn}), and therefore to show that (\ref{eqn}) has a solution it suffices to find a supersolution for (\ref{tildeeqn}).

Rewritten in terms of $\tilde f$, the integral condition in (\ref{Mikos2}) becomes
$$
\int_1^\infty s^{-(2+2/d)}\tilde f(s)\,\d s<\infty,
$$
and after the substitution $s=\tau^{-d/2}$ we obtain
$$
\int_0^1 \tau^{d/2}\tilde f(\tau^{-d/2})\,\d\tau<\infty.
$$

We now show that for any $A>1$, $v(t)=AS(t)u_0+\chi_\Omega$ is a supersolution of (\ref{tildeeqn}) on some suitable time interval, i.e.\ satisfies the condition (\ref{supersol}) in Theorem \ref{MS}. In order to do this, first recall the smoothing estimate
$$
\|S(t)u_0\|_{L^\infty}\le ct^{-d/2}\|u_0\|_{L^1}.
$$

We therefore obtain
\begin{align*}
S(t)u_0+\int_0^t& S(t-s)\tilde f(v(s))\,\d s=S(t)u_0+\int_0^t S(t-s)f(AS(s)u_0+1)\,\d s\\
&=S(t)u_0+\int_0^t S(t-s)\left(\frac{\tilde f(AS(s)u_0+1)}{AS(s)u_0+1}\right)(AS(s)u_0+1)\,\d s\\
&\le S(t)u_0+\int_0^t S(t-s)\left\|\frac{\tilde f(AS(s)u_0+1)}{AS(s)u_0+1}\right\|_{L^\infty}(AS(s)u_0+1)\,\d s.
\end{align*}
Since the $L^\infty$ norm is a scalar constant and $S(t-s)$ is linear, it follows that
$$
{\mathscr F}(v)(t)\le S(t)u_0+\left\{\int_0^t\left\|\frac{\tilde f(AS(s)u_0+1)}{AS(s)u_0+1}\right\|_{L^\infty}\,\d s\right\}[S(t)u_0+\chi_\Omega],
$$
as $S(t)\chi_\Omega\le\chi_\Omega$ for all $t>0$. 
Now, using the fact that $\tilde f(s)/s$ is non-decreasing for $s\ge1$,
\begin{align*}
{\mathscr F}(v)(t)&\le
S(t)u_0+\left\{\int_0^t\frac{\tilde f(\|AS(s)u_0+1\|_{L^\infty})}{\|AS(s)u_0+1\|_{L^\infty}}\,\d s\right\}[AS(t)u_0+\chi_\Omega]\\
&\le
S(t)u_0+\left\{\int_0^t\frac{\tilde f(2Acs^{-d/2}\|u_0\|_{L^1})}{2Acs^{-d/2}\|u_0\|_{L^1}}\,\d s\right\}[AS(t)u_0+\chi_\Omega],
\end{align*}
for $s$ sufficiently small, since
$$
\|AS(s)u_0+1\|_{L^\infty}=\|AS(s)u_0\|_{L^\infty}+1\le Acs^{-d/2}\|u_0\|_{L^1}+1\le 2Acs^{-d/2}\|u_0\|_{L^1}
$$
for $s$ sufficiently small.

Therefore ${\mathscr F}(v)(t)$ is bounded above by
\begin{align*}
S(t)u_0+&(2Ac\|u_0\|_{L^1})^{2/d}\left(\int_0^{t(2Ac\|u_0\|_{L^1})^{-2/d}}\tau^{d/2}\tilde f(\tau^{-d/2})\,\d\tau\right)[AS(t)u_0+\chi_\Omega]\\
&\le AS(t)u_0+\chi_\Omega,
\end{align*}
provided that $t$ is sufficiently small. Local existence of a solution $u(t)$ with $u(t)\le v(t)=2S(t)u_0+\chi_\Omega$ now follows from Theorem \ref{MS}. That $u(t)$ is bounded in $L^1(\Omega)$ now follows from Theorem \ref{MS}. It follows that $u\in L^\infty_{\rm loc}((0,T);L^\infty(\Omega))$, i.e.\ is a classical solution. Then from the integral condition on $f$, $f(u)\in L^1((0,T);L^1(\Omega))$, whence from (\ref{eq:mild}) it follows that $u\in C^0([0,T];L^1(\Omega))$.
\end{proof}

We have therefore obtained the following characterisation of those $f$ for which there is local existence in $L^1(\Omega)$.

\begin{corollary}\label{corL1}
If $f:[0,\infty)\to[0,\infty)$ is continuous and non-decreasing then (\ref{eqn}) has the local $L^1$ existence property if and only if
$$
  \int_1^\infty s^{-(1+2/d)}F(s)\,\d s<\infty,\qquad\mbox{where}\qquad F(s)=\sup_{1\le t\le s}\frac{f(t)}{t}.
$$
\end{corollary}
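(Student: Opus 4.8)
The plan is to assemble the corollary directly from the three results already established in this section, since together they yield both implications with no new work. Write $p=1+2/d$, so that the integral appearing in the statement is exactly $\int_1^\infty s^{-p}F(s)\,\d s$, matching the quantity in both part (ii) of Lemma \ref{equivcond} and the hypothesis (\ref{Mikos2}) of Theorem \ref{T4}; keeping this bookkeeping straight is the only thing that needs attention.

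First I would dispatch the `if' direction. Suppose $\int_1^\infty s^{-(1+2/d)}F(s)\,\d s<\infty$. Then Theorem \ref{T4} applies verbatim and produces, for every non-negative $u_0\in L^1(\Omega)$, a local solution lying in $L^\infty_{\rm loc}((0,T);L^\infty(\Omega))\cap C^0([0,T];L^1(\Omega))$; in particular this is a local $L^1$ solution, so (\ref{eqn}) has the local $L^1$ existence property.

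For the converse I would argue by contraposition. Suppose the integral diverges, i.e.\ $\int_1^\infty s^{-p}F(s)\,\d s=\infty$. This is precisely condition (ii) of Lemma \ref{equivcond} (with the same $p>1$), so by that lemma condition (i) holds: there is a sequence $\{s_k\}$ with $s_{k+1}\ge\theta s_k$ for some $\theta>1$ and $\sum_k s_k^{-p}f(s_k)=\infty$. Theorem \ref{Todd} then furnishes a non-negative $u_0\in L^1(\Omega)$ for which (\ref{ours})---which is (\ref{eqn})---has no local integral solution remaining in $L^1_{\rm loc}(\Omega)$ for $t>0$, and in particular no local $L^1$ solution. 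Hence the local $L^1$ existence property fails. Combining the two directions gives the stated equivalence.

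There is essentially no obstacle: the content is entirely contained in Theorems \ref{Todd} and \ref{T4} and Lemma \ref{equivcond}, and the only checks are that the exponent $1+2/d$ in the corollary coincides with the $p$ used in Lemma \ref{equivcond}, and that `no local $L^1$ solution for some $u_0$' is exactly the negation of the local $L^1$ existence property. One mild point worth flagging is that Theorem \ref{T4} requires $\Omega$ bounded (so that $\chi_\Omega\in L^1(\Omega)$), which is consistent with the standing assumptions in force throughout.
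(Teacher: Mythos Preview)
Your proposal is correct and matches the paper's approach exactly: the corollary is stated there as an immediate consequence of Theorem~\ref{T4} (for the `if' direction) together with Lemma~\ref{equivcond} and Theorem~\ref{Todd} (for the `only if' direction), with no separate proof given. Your bookkeeping checks on the exponent $p=1+2/d$ and on the hypotheses (continuity for Theorem~\ref{T4}, monotonicity for Theorem~\ref{Todd} and Lemma~\ref{equivcond}) are all in order.
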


We note that one can apply the `local existence' part of this characterisation (i.e.\ Theorem \ref{T4}) to a nonlinearity $g$ that is not non-decreasing by finding a non-decreasing function $f(s)$ such that $g(s)\le f(s)$, applying Theorem \ref{T4} and then deducing local existence by comparison. The example of the following section provides an example of this along with an illustration of the application of Corollary \ref{corL1}.

\subsection{An example: $f(s)=s^{1+2/d}/[\log(\e+s)]^\beta$}\label{logex}

We mentioned before the proof of Theorem \ref{Todd} that the family of nonlinearities
$$
f(s)=\frac{s^p}{[\log(\e+s)]^\beta},\qquad p=1+2/d,\qquad\beta\ge0,
$$
provides an interesting set of examples, particularly in the light of the (erroneous) expectation that $f(s)=s^{1+2/d}$ lies on the `boundary' between those functions for which (\ref{nhe}) does and does not have the local $L^1$ existence property.

Strictly, such a function $f$ only falls within the scope of our results when it is non-decreasing, which occurs if and only if $\beta\le\lambda p$, where $\lambda\simeq 3.15$ is the largest positive root of the equation $\e^x=\e^2x$. It follows from Corollary \ref{corL1} that
\begin{itemize}
\item[(i)] if $0\le\beta\le 1$ then (\ref{nhe}) does not have the $L^1$ local existence property;
\item[(ii)] if $1<\beta\le\lambda p$ then (\ref{nhe}) does have the $L^1$ local existence property;
\end{itemize}
and since although when $\beta>\lambda p$ the function $f(s)$ is not monotone, it is bounded above by the monotone $s^p/\log(\e+s)^{\lambda p}$, which provides a supersolution and hence
\begin{itemize}
\item[(iii)] if $\beta>\lambda p$ then (\ref{nhe}) does have the $L^1$ local existence property;
\end{itemize}

Within this family the function $f(s)=s^p/\log(\e+s)$ lies on the `boundary'. Obviously one could refine this with the addition of an arbitrary number of repeated logarithms.

\section{Results for the whole space and for Neumann boundary conditions}

It is worth remarking that since they rely only on Gaussian lower bounds for the Dirichlet heat kernel, the non-existence results of Theorems \ref{T1} and \ref{Todd} are valid with essentially the same proofs for the equations posed on the whole space $\R^d$. They are also valid for Neumann boundary conditions ($\frac{\partial u}{\partial n}=0$ on $\partial\Omega$), since
$$
K_{\Omega_N}(x,y;t)\ge K_{\Omega_D}(x,y;t),\qquad x,y\in\Omega,\ t>0,
$$
where $\Omega_N$ and $\Omega_D$ denote the Neumann and Dirichlet heat kernels, respectively (the proof follows by comparison, or one can use probabilistic methods, see Corollary 2.5 in \cite{vdB89}, for example).

However, local existence results on the whole space require some additional assumptions. It is easy to see that if $f(0)\neq0$ then any non-negative initial condition gives rise to a solution that is not in $L^q(\R^d)$ for any $q\in[1,\infty)$. Indeed, since then $u(t)\ge0$ for all $t\ge0$ we have
$$
u(t)\ge \int_0^t S(t-s)f(u(s))\,\d s\ge\int_0^t S(t-s)f(0)\,\d s=tf(0)\notin L^q(\R^d).
$$
We also require a `bounded derivative at zero' condition, namely
\begin{equation}\label{limsup0}
\limsup_{s\to0}\frac{f(s)}{s}<\infty.
\end{equation}
Without this condition we can find a non-negative $u_0\in L^q(\R^d)$ for which the solution is not bounded in $L^q(\R^d)$ for all $t>0$.

  Indeed, if (\ref{limsup0}) does not hold then there exist $s_n\to0$ such that $s_n\le n^{-2}$ and $f(s_n)\ge n^2s_n$. Consider initial data
$$
u_0=\sum_{n=1}^\infty c_d^{-1}s_n\chi_{n^{-2/d}s_n^{-q/d}}(x_n)
$$
where the $x_n$ are chosen such that $B(x_n,n^{-2/d}s_n^{-q/d})$ are disjoint. Note that $\|u_0\|_{L^q}<\infty$ and that $n^{-2/d}s_n^{-q/d}\ge 1$.

Then
$$
S(s)u_0\ge \sum_{n=n_0}^\infty s_n\chi_{n^{-2/d}s_n^{-q/d}}(x_n)
$$
for all $s\le1$. So for $t\le 1$ we have
\begin{align*}
u(x,t)&\ge\int_0^t\sum_{n=1}^\infty S(t-s) n^2  s_n \chi_{n^{-2/d}s_n^{-q/d}}(x_n)\,\d s\\
&\ge\int_0^t\sum_{n=1}^\infty c_d n^2 s_n \chi_{n^{-2/d}s_n^{-q/d}}(x_n)\,\d s\\
&= tc_d\sum_{n=1}^\infty n^2 s_n \chi_{n^{-2/d}s_n^{-q/d}}(x_n)
\end{align*}
and so
$$
\int_{\R^d} |u(x,t)|^q\,\d x\ge (tc_d)^q\sum_{n=1}^\infty n^{2q}s_n^qn^{-2}s_n^{-q}=tc_d\sum_{n=1}^\infty n^{2(q-1)}=\infty.
$$

Now, for $q>1$ if we have, with $p=1+2q/d$,
$$
\limsup_{s\to\infty}s^{-p}f(s)<\infty\qquad\mbox{and}\qquad\limsup_{s\to0}\frac{f(s)}{s}<\infty
$$
then
$$
f(s)\le C(s+s^p)
$$
for some $C>0$. For $f(s)=C(s+s^p)$ we can guarantee the local $L^q$ existence property on $\R^d$ as follows. First, results guaranteeing the $L^q$ local existence property on the whole space when $f(s)=2Cs^p$ can be found in Weissler \cite{W79,W80} (the analysis there is valid on the whole space), Theorem 1 in Giga \cite{G}, or Robinson \& Sier{\.z}{\polhk{e}}ga \cite{RS2} (see `Final comments'). So given a non-negative $u_0\in L^q(\R^d)$, let $u(t)$ be the local $L^q$ solution obtained in this way. Now define $v(t)=\e^{2Ct}u(t)$. Then
\begin{align*}
v_t-\Delta v&=2C\e^{2Ct}u+\e^{2Ct}(u_t-\Delta u)\\
&=2C\e^{2Ct}u+\e^{2Ct}2Cu^p\\
&=2C(v+(\e^{2Ct})^{1-p}v^p)\\
&\ge C(v+v^p)
\end{align*}
provided that $(\e^{2Ct})^{1-p}\ge 1/2$. This is legitimate  since $v$ is a strong, classical solution for $t>0$. It then follows easily that $v$ is a supersolution in the integral sense of Theorem \ref{MS} on some small time interval. The existence of such a supersolution, which is bounded in $L^q(\R^d)$, then implies the existence of a solution bounded in $L^q(\R^d)$ using Theorem \ref{MS}.

For $q=1$ local existence follows from the arguments in Theorem \ref{T4}, now taking
$$
F(s)=\sup_{0\le t\le s}\frac{f(t)}{t}
$$
and observing that if $u_0\ge0$ and is non-zero then $S(t)u_0>0$ for all $t>0$ (this follows immediately from the expression for $S(t)u_0$ in terms of the Gaussian kernel). (If $u_0=0$ then obviously $u(t)=0$ is a solution since $f(0)=0$ by the limsup condition at $s=0$.)

We summarise formally in the following theorem.

\begin{theorem}\label{Rdresult}
 Let $f:[0,\infty)\to[0,\infty)$ be continuous and non-decreasing and let $\Omega=\R^d$. Then
 \begin{itemize}
 \item[(i)] for $q\in(1,\infty)$ equation (\ref{nhe}) has the local $L^q$ existence property if and only if
$$
\limsup_{s\to0}\frac{f(s)}{s}<\infty\quad\mbox{and}\quad\limsup_{s\to\infty}s^{-(1+2q/d)}f(s)<\infty;
$$
\item[(ii)] equation (\ref{nhe}) has the local $L^1$ existence property if and only if
$$
\limsup_{s\to0}\frac{f(s)}{s}<\infty\quad\mbox{and}\quad\int_1^\infty s^{-(1+2/d)}F(s)\,\d s<\infty,
$$
where $F(s)=\sup_{0\le t\le s}\frac{f(t)}{t}$.
\end{itemize}
\end{theorem}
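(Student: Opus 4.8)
The plan is to establish both equivalences by splitting each into three parts: necessity of the asymptotic condition as $s\to\infty$, necessity of the `bounded derivative at zero' condition (\ref{limsup0}), and sufficiency via an $L^q(\R^d)$-bounded supersolution together with Theorem \ref{MS}. Write $p=1+2q/d$. Almost every ingredient is already available: the non-existence results and their proofs, the comparison/existence machinery of Theorem \ref{MS}, the classical power-nonlinearity theory, Lemma \ref{equivcond}, and the supersolution argument of Theorem \ref{T4}.

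For necessity of the condition at infinity, recall that the proofs of Theorems \ref{T1} and \ref{Todd} use only the Gaussian lower bound (\ref{KOG}), and the Gaussian kernel \emph{is} the heat kernel on $\R^d$; hence both theorems hold verbatim with $\Omega=\R^d$. Thus for $q\in(1,\infty)$, if $\limsup_{s\to\infty}s^{-p}f(s)=\infty$ then Theorem \ref{T1} yields a non-negative $u_0\in L^q(\R^d)$ with no local $L^q$ solution; for $q=1$, if $\int_1^\infty s^{-(1+2/d)}F(s)\,\d s=\infty$ then Lemma \ref{equivcond} provides a sequence $\{s_k\}$ with $s_{k+1}\ge\theta s_k$ and $\sum_k s_k^{-(1+2/d)}f(s_k)=\infty$, and Theorem \ref{Todd} gives a non-negative $u_0\in L^1(\R^d)$ with no local $L^1$ solution. (In the $q=1$ `if' direction the hypothesis (\ref{limsup0}) makes $\sup_{0\le t\le1}f(t)/t$ finite, so the $F$ here and the $F$ of Theorem \ref{T4} differ by a bounded term and the two integral conditions coincide.) For necessity of (\ref{limsup0}), this is exactly the computation displayed just before the theorem: if $\limsup_{s\to0}f(s)/s=\infty$ one chooses $s_n\to0$ with $s_n\le n^{-2}$ and $f(s_n)\ge n^2 s_n$, places disjoint bumps of height $\sim s_n$ on balls of radius $r_n=n^{-2/d}s_n^{-q/d}\ge1$, applies Corollary \ref{notail} to get $S(s)u_0\ge\sum_n s_n\chi_{r_n}(x_n)$ for $0<s\le1$, and a one-step Duhamel lower bound forces $\|u(t)\|_{L^q}^q\ge(tc_d)^q\sum_n n^{2(q-1)}=\infty$ for all $t\in(0,1]$. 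The same estimate with $f(0)>0$ (equivalently, $f(s)/s\ge f(0)/s\to\infty$) shows (\ref{limsup0}) forces $f(0)=0$, which handles the degenerate data $u_0=0$.

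For sufficiency when $q\in(1,\infty)$, the two finiteness conditions combine to give $f(s)\le C(s+s^p)$ for some $C>0$. Local $L^q(\R^d)$ existence for the pure power $2Cs^p$ is classical (Weissler \cite{W79,W80}, Giga \cite{G}, or Robinson \& Sier{\.z}\polhk{e}ga \cite{RS2}); given non-negative $u_0\in L^q(\R^d)$, take such a solution $u$ and set $v(t)=\e^{2Ct}u(t)$. Differentiating, $v_t-\Delta v=2C\bigl(v+\e^{2Ct(1-p)}v^p\bigr)\ge C(v+v^p)\ge f(v)$ once $\e^{2Ct(1-p)}\ge\tfrac12$, which holds on a short interval since $p>1$; then $v$ is a classical, hence integral, supersolution bounded in $L^q(\R^d)$, and Theorem \ref{MS} produces a local $L^q(\R^d)$ solution $u\le v$.

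For sufficiency when $q=1$ — the delicate case, and the main point requiring care — I would rerun the argument of Theorem \ref{T4}, the only obstruction being that the additive supersolution $\chi_\Omega$ used there is not in $L^1(\R^d)$. Instead, for $u_0\ne0$ I take $v(t)=2S(t)u_0$ (for $u_0=0$ simply $u\equiv0$, since $f(0)=0$). As $u_0\ge0$ is non-trivial, the Gaussian representation gives $S(s)u_0>0$ everywhere for $s>0$, while $\|2S(s)u_0\|_{L^\infty}\le 2cs^{-d/2}\|u_0\|_{L^1}$. Writing $f(2S(s)u_0)=\tfrac{f(2S(s)u_0)}{2S(s)u_0}\,2S(s)u_0\le F\bigl(\|2S(s)u_0\|_{L^\infty}\bigr)\,2S(s)u_0$ — legitimate precisely because $F(s)=\sup_{0\le t\le s}f(t)/t$ is finite, which is where (\ref{limsup0}) enters, and non-decreasing — and using $S(t-s)S(s)=S(t)$, one gets $[{\mathscr F}(v)](t)\le S(t)u_0+2\bigl(\int_0^t F(2cs^{-d/2}\|u_0\|_{L^1})\,\d s\bigr)S(t)u_0$. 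The change of variables $s\mapsto\tau^{-d/2}$ of Theorem \ref{T4} (using $F(s)=\tilde f(s)/s$ for $s\ge1$) shows the bracketed integral is finite and tends to $0$ as $t\to0$; taking $t$ small enough that it is $\le\tfrac12$ gives $[{\mathscr F}(v)](t)\le 2S(t)u_0=v(t)$, so $v$ is an $L^1(\R^d)$ integral supersolution. Theorem \ref{MS} then yields a solution $u\le v$ bounded in $L^1(\R^d)$, and the regularity $u\in L^\infty_{\rm loc}((0,T);L^\infty(\R^d))\cap C^0([0,T];L^1(\R^d))$ follows exactly as in Theorem \ref{T4}.
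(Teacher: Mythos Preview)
Your proposal is correct and follows essentially the same route as the paper. The paper leaves the $L^1$ sufficiency on $\R^d$ as a one-line sketch (``follows from the arguments in Theorem \ref{T4}, now taking $F(s)=\sup_{0\le t\le s}f(t)/t$ and observing $S(t)u_0>0$''), and your explicit choice $v(t)=2S(t)u_0$ together with the pointwise bound $f(v)/v\le F(\|v\|_{L^\infty})$ is precisely the argument that sketch is pointing to; all other parts (necessity via Theorems \ref{T1}/\ref{Todd}, the disjoint-bump construction for failure of (\ref{limsup0}), and the $v=\e^{2Ct}u$ supersolution for $q>1$) match the paper verbatim.
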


\section{Concluding remarks}

We have completely characterised those non-negative, non-decreasing, continuous functions $f$ for which the equation
$$
u_t-\Delta u=f(u),\qquad u|_{\partial\Omega}=0,\qquad u(0)=u_0\in L^q(\Omega),\ u_0\ge0,
$$
has at least one local solution that is bounded in $L^q(\Omega)$. For $1<q<\infty$ this occurs if and only if
$$
\limsup_{s\to\infty}s^{-(1+2q/d)}f(s)<\infty,
$$
while for $q=1$ this occurs if and only if
$$
 \int_1^\infty s^{-(1+2/d)}\left(\sup_{1\le t\le s}\frac{f(t)}{t}\right)\,\d s<\infty.
$$
 We have also given results for the equations on the whole space $\R^d$ and for the Neumann problem on a bounded domain.

The non-existence parts of our arguments are perhaps the most novel, using lower bounds on the Dirichlet heat kernel due to van den Berg \cite{vdB89,vdB1} to give lower bounds on solutions of the heat equation with characteristic functions as initial data, and hence lower bounds on solutions of the semilinear problem. The $L^1$ case behaves very differently from the problem in spaces with higher integrability, with the appearance of the upper bound
$$
sF(s)=s\sup_{1\le t\le s} [f(t)/t]
$$
in both the blowup and existence criteria something of a surprise.

An open question left by our $L^q$-based analysis is whether we have `strong blowup', $u(t)\notin L^q$ for \emph{all} $t>0$, when $q>1$ and
$$
\limsup_{s\to\infty}s^{-\gamma}f(s)=\infty,\qquad \gamma\in[1+2q/d,q(1+2/d)],
$$
or whether there is a true transition from such strong blowup (obtained in \cite{LRS1} for $\gamma>q(1+2/d)$ to only the unbounded behaviour $\limsup_{t\to0}\|u(t)\|_{L^q}$ (obtained here in Theorem \ref{T1} when $\gamma=1+2q/d$). A related question is whether it is possible to exclude the existence of local integral solutions for a wider class of $f$ than we do in the discussion at the end of Section \ref{noL1} (we currently require $f(s)\ge Cs$ for some $C>0$).

It would be interesting to attempt to prove similar characterisation results in other scales of spaces, such as Sobolev spaces or Besov spaces. These would require different techniques, given that our current arguments do not take into account the smoothness of solutions but only their integrability.

 Seeking generalisation in a different direction, one could ask whether there is a way of identifying the critical Lebesgue space for the more general class of positive but not necessarily monotone $f$, or even for general $f$ with sign-changing initial data.

Finally, we note that we have not attempted here to consider the problem of uniqueness. For the nonlinearity $f(u)=|u|^{p-1}u$ Ni \& Sacks \cite{NS} proved non-uniqueness for the critical value of $p$ (Theorem 3); see also Matos \& Terraneo \cite{MatosTerr} and Haraux \& Weissler \cite{HW}. It would be interesting to see whether it is possible to obtain an exact characterisation of those $f$ that admit unique solutions, perhaps based on asymptotic conditions generalising (\ref{Lip}) in the way that our conditions for local existence generalise the growth rates of the canonical example $f(u)=|u|^{p-1}u$.

\section*{Acknowledgments}

JCR is supported by an EPSRC Leadership Fellowship EP/G007470/1. The work of MS was supported by an EPSRC Platform Grant EP/I019138/1, and also by EP/G007470/1. The work leading to this paper was begun during a visit to the University of Zurich by JCR as a guest of Michel Chipot, and JCR would like to thank Michel for his hospitality. The final version profited greatly from visits to Warwick by RL and AVL, funded by EP/G007470/1.

\bibliographystyle{model1-num-names}
\bibliography{<your-bib-database>}

\end{document}